\newtheorem{Th}{Theorem}[section]
\newtheorem{Lem}[Th]{Lemma}
\newtheorem{Prop}[Th]{Proposition}
\newtheorem{Cor}[Th]{Corollary}
\theoremstyle{definition}
\newtheorem{remark}[Th]{Remark}
\numberwithin{equation}{section}
\theoremstyle{definition}
\DeclareMathOperator*{\argmin}{arg\,min}
\newcommand{\fc}{\mathcal{F}}
\newcommand{\ca}{\mathcal{A}}
\newcommand{\cu}{\mathcal{U}}
\newcommand{\pr}{\mathbb{P}}
\newcommand{\ex}{\mathbb{E}}
\newcommand{\eps}{\varepsilon}
\title{\textsc{A Sequential Testing Problem with Signal Control}}
\author{
\textsc{Steven Campbell} 
\thanks{ 
\,\textsc{Columbia University, Department of Statistics, 1255 Amsterdam Ave, New York, NY 10027, USA} (e-mail: {\it sc5314@columbia.edu})}
\and
\textsc{Georgy Gaitsgori} 
\thanks{ 
\,\textsc{Columbia University, Department of Mathematics, 2990 Broadway, New York, NY 10027, USA} (e-mail: {\it gg2793@columbia.edu})}
\and 
\textsc{Richard Groenewald} 
\thanks{ 
\,\textsc{Columbia University, Department of Statistics, 1255 Amsterdam Ave, New York, NY 10027, USA} (e-mail: {\it rag2202@columbia.edu})}
}
\date{\today}
\begin{document}
\maketitle

\begin{abstract}
We study a controlled version of the Bayesian sequential testing problem for the drift of a Wiener process, in which the observer exercises discretion over the signal intensity. This control incurs a running cost that reflects the resource demands of information acquisition. The objective is to minimize the total expected cost, combining both the expenditure on control and the loss from misclassifying the unknown drift. By allowing for a general class of loss functions and \emph{any measurable} cost of control, our analysis captures a broad range of sequential inference problems.
We show that when a function, determined by the cost structure, admits a global minimizer, the optimal control is constant and explicitly computable, thereby reducing our setting to a solvable optimal stopping problem. If no such minimizer exists, an optimal control does not exist either, yet the value function remains explicit. Our results thus demonstrate that full tractability can be retained even when extending sequential inference to include endogenous control over the information flow.
\end{abstract}

\noindent
 {\sl AMS  2020 Subject Classification:}  Primary 93E20, 60G40, 62L15; Secondary 62M20, 62C10. 

\noindent
 {\sl Keywords:} Sequential testing, stochastic control, optimal stopping, filtering, variational inequalities.

\section{Introduction}

Consider a scenario in which a quantity of interest, modeled by a binary random variable $\theta$, cannot be observed directly. Instead, the observer receives a continuous stream of noisy, modulated data
\begin{equation}
    X(t) = \theta \int_0^t u(s) \, ds + W(t), \quad 0 \le t < \infty,
\end{equation}
where $W(\cdot)$ is a standard Brownian motion independent of $\theta$, and $u(\cdot)$ is a \textit{signal intensity} selected by the observer. 
The objective is to select an appropriate control $u(\cdot)$ and choose a stopping time $\tau$ at which to infer the true value of $\theta$ from the information in the signal process $X(\cdot)$, as accurately and cost-effectively as possible.

The trade-off between estimation accuracy and the cost of information is captured through a loss function $\mathfrak{L}(\cdot,\cdot)$ and two running cost components: a constant temporal cost $c$, and a control-dependent cost $\varphi(u(\cdot))$, both incurred per unit of time. The observer seeks to minimize the expected total cost
\begin{equation}
    \ex \left[\mathfrak{L}(\theta, d) + \int_0^\tau \varphi(u(s)) \, ds + c\tau \right],
\end{equation}
where the decision $d$ about $\theta$ is made at time $\tau$ based on the path $(X(t))_{0\leq t\leq \tau}$. In this way, the observer faces a \textit{triple problem of sequential estimation (filtering), optimal control, and stopping}.

Interestingly, this problem admits an explicit and structurally simple solution under broad assumptions on the cost functions. Specifically, if a certain function $\eta(\cdot)$ (defined in \eqref{definition_eta_function}) attains its infimum over the state space of admissible controls, then the optimal control is constant and can be computed explicitly. Moreover, the experiment should be terminated when the optimally controlled process $X(\cdot)$ reaches one of two computable thresholds. If $\eta(\cdot)$ fails to attain a minimum, then no optimal control exists. However, in both cases, the value function can still be computed explicitly. Notably, the resulting value function closely resembles that of the classical sequential testing problem (see \cite{CampZha24, Shiryaev67}), and the structure of the solution persists irrespectively of the choice of $\varphi(\cdot)$, which may be \textit{any} measurable function. This generality highlights a surprising robustness: even for arbitrary measurable control costs, the structure of the optimal policy remains tractable and interpretable.

Our approach applies standard filtering techniques to reduce the original problem to one of stochastic control with discretionary stopping, and handles the latter using variational inequalities. It is important to highlight that although the original problem is formulated in the ``strong sense,'' we analyze and solve our triple problem in its ``weak formulation,'' which is more amenable to analysis. However, since the optimal control (when it exists) turns out to be constant, the strong and weak formulations ultimately yield the same solution. We complement this result with an explicit characterization of the statistical properties of the optimal strategy, enabling quantitative assessment of its performance and sensitivity to model inputs.

Despite its abstract formulation, our problem is closely aligned with real-world decision-making under uncertainty. Many modern inference tasks demand real-time decisions based on partial and noisy information. In domains such as adaptive experimentation, systems monitoring, and diagnostic testing, decision-makers must balance the cost of acquiring additional data against the urgency and accuracy of their conclusions. For example, in disease screening or cognitive testing, clinicians aim to determine a binary outcome (e.g., the presence or absence of a condition) from a stream of evidence. Tests may vary in both resolution and cost, and decisions frequently carry time-sensitive consequences. These settings naturally give rise to problems of sequential inference under resource constraints.

The remainder of the paper is organized as follows. The next subsection reviews related work. Section \ref{sec_model} develops the formal setup and carries out the key reduction---via filtering---from a full inference problem to a joint control-and-stopping formulation in weak form. In Section \ref{sec_prem_analysis}, we begin the analysis, introduce the relevant variational inequalities, and propose a candidate solution structure. Section \ref{sec_main_results} contains our main results and their proofs. Finally, Section \ref{sec_examples} presents several illustrative examples and discussion.

\subsection{Related work}

As noted above, our problem combines all three features of sequential estimation (filtering), optimal stopping, and control, making it a rare instance of an explicitly solvable \textit{triple} problem. To the best of our knowledge, only a handful of other works address such a combination \cite{BayKra15,DalShi,DaySez16,EksKar,EksLinOlo,MilEks24,HarSun}. The first is due to Dalang and Shiryaev \cite{DalShi}, who study a quickest detection problem with signal control. More recently, Milazzo and Ekström \cite{MilEks24} study a comparable model with irreversible control over the observation rate. The work of Dayanik and Sezer \cite{DaySez16} is also pertinent, though of a somewhat different nature: they examine a setting with discrete controls where sensors can be installed sequentially to improve observation quality. Whereas, Bayraktar and Kravitz \cite{BayKra15} study detection with a costly and finite number of available observations that must be allocated over time, and De Angelis et al.\ \cite{de2022quickest} allow for costly inspections that may produce false negatives. For a two player setting, we point to the work of Ekström, Lindensjö, and Olofsson \cite{EksLinOlo}, which analyzes a non-zero-sum stochastic game involving both control and stopping.

Two further examples are especially close in spirit to our study. Harrison and Sunar \cite{HarSun} consider a similar structural framework but impose a different cost functional with exponential discounting and restrict the control process to take finitely many values. Ekström and Karatzas \cite{EksKar}, by contrast, study a sequential estimation problem akin to ours, allowing for a general prior on the latent variable $\theta$ and employing an $L^2$ loss to measure estimation accuracy. Surprisingly, despite substantial differences in both our assumptions and solution methods, the structure of the optimal strategies in our paper mirrors that of \cite{EksKar}. Collectively, these two works suggest that explicit solutions to triple problems in sequential inference are possible when one balances generality in the prior with structure in the cost, or vice versa, generality in the cost and more restrictive assumptions on the prior.

When control over the signal is disallowed, the problem reduces to the classical setting of sequential testing for the drift of an arithmetic Brownian motion. This was first solved in continuous time by Shiryaev \cite{Shiryaev67} and later extended to more general classification losses by Campbell and Zhang \cite{CampZha24}. Variants of this problem---featuring different assumptions on the cost structure, observation dynamics, or time horizon---have been extensively studied. Without aiming for completeness, we refer to \cite{CGGK25, EksKarVai22, EksVai15, EksWang24, ErnPes24, ErnPesZho20, GapPes04, GapShi11, JohPes18, Johnson22, PesShi00, ZhiShi11}, and the references therein.

For problems that combine control and stopping but not filtering, we refer the reader to the seminal work of Dubins and Savage \cite{DubSav}, as well as the foundational monographs of Krylov \cite{Krylov80}, El Karoui \cite{ElKaroui81}, and Bensoussan and Lions \cite{BenLio82}. In the spirit of our results, we also highlight a number of works that provide explicit solutions to control-and-stopping problems: see \cite{DavZer, KarOcoWanZer} for examples involving singular control, and \cite{BGK25, KarOco02, KarSud99, KarSud01, OcoWeer08} for settings involving drift and/or volatility control.

\section{Model}\label{sec_model}

We are now ready to formalize the problem. As already mentioned, we begin with a ``strong'' formulation, which is the natural (and necessary) setting for the filtering arguments that follow. These yield an equivalent Markovian representation that can then be recast in a ``weak'' sense, which is more amenable to analysis.

\subsection{Strong formulation}\label{subsec_strong_formlation}

Let $(\Omega, \mathcal{F}, \mathbb{F}=(\mathcal{F}_t)_{t\geq0},\mathbb{P})$ be a filtered probability space satisfying the usual conditions and supporting both a Bernoulli random variable $\theta$ with $\mathbb{P}(\theta = 1) = p = 1 - \mathbb{P}(\theta = 0)$ for some $0 < p < 1$, and an independent $(\mathbb{P},\mathbb{F})$-Brownian motion $W(\cdot) = (W(t))_{t \ge 0}$. On this space, we define a controlled process $X^u(\cdot) = (X^u(t))_{t \ge 0}$ satisfying 
\begin{equation}\label{controlled_sde}
    X^u(t) = \theta \int_0^t u(s) \, ds + W(t), \quad 0 \le t < \infty,
\end{equation}
where $u(\cdot) = (u(t))_{t \ge 0}$ is an $\mathbb{F}^{X^u}$--progressively measurable control process, taking values in a non-empty measurable subset $\mathcal{U} \subseteq \mathbb{R}\setminus\{0\}$, and satisfying the square integrability condition 
\begin{equation}\label{eqn:sq.int.u}
    \mathbb{E}\left[\int_0^tu^2(s) \, ds\right]<\infty, \quad \forall t\geq0.
\end{equation}
By $\mathbb{F}^{X^u} \coloneqq (\mathcal{F}^{X^u}(t))_{t \ge 0}$ we mean the usual augmentation of the filtration generated by $X^u(\cdot)$. Evidently, $u(\cdot)$ represents the signal intensity that governs the information revealed by $X^u(\cdot)$ about $\theta$. Since $u=0$ amounts to halting the test, we disallow this choice. As we will discuss in a Remark \ref{rem_zero_control} below, located after the proof of Theorem \ref{thm_main}, 
this decision is made mainly for convenience of exposition.
We denote by $\mathcal{T}^{X^u}$ the collection of all $\mathbb{F}^{X^u}$--stopping times.

\begin{remark}
    We allow for general measurable subsets $\mathcal{U} \subseteq \mathbb{R}\setminus\{0\}$. Canonical choices include $\mathcal{U} = (0,\infty)$ for positive controls, or $\mathcal{U} = (a, b]$ with $0 \le a < b < \infty$ for bounded controls. Our analysis is agnostic as to the specific choice of $\mathcal{U}$.
\end{remark}

\begin{remark}\label{rem:not.circular.strong.formulation}
    At first glance, coupling \eqref{controlled_sde} with the stipulation that $u(\cdot)$ be $\mathbb{F}^{X^u}$--progressively measurable may look circular.  However, this requirement is benign and can be understood as follows. Formally, an admissible control $u(\cdot)$ is an $\mathbb F$--progressively measurable, $\mathcal U$--valued process satisfying \eqref{eqn:sq.int.u} for which \eqref{controlled_sde} has a strong solution $X^u(\cdot)$ and with respect to which $u(\cdot)$ is $\mathbb F^{X^u}$--progressively measurable. 
    This admissible class is not only non-empty (it contains all constant controls), but also sufficiently rich: if we place ourself on a canonical space, this class contains the subclass of all controls that can be represented in terms of a jointly measurable non-anticipative path functional $F: \mathbb{R}_+ \times C([0, \infty)) \to \mathcal{U}$ that is Lipschitz and has linear growth on $C[0,T]$ for every finite $T>0$; in this case, $u(t) = F(t,X^u(\cdot\wedge t)), 0 \le t < \infty$ is admissible.
    Indeed, under these assumptions \eqref{controlled_sde} has a unique strong solution and $u(\cdot)$ is $\mathbb{F}^{X^u}$--progressively measurable (cf. \cite[Theorem 1]{cont2020support} and \cite[Definition 2.1]{cont2013functional}). We do not pursue these technicalities further, as they disappear in the weak formulation.
\end{remark}

\begin{remark}
    The main purpose of the square-integrability condition \eqref{eqn:sq.int.u} is to ensure the validity of the filtering lemma (Lemma \ref{lem_filtering}) below. Once we pass to the weak formulation (Section~\ref{subsec_weak_formulation}), this assumption can be relaxed without without affecting the rest of the analysis. However, it is needed to preserve the link between the weak formulation and the filtering interpretation in the strong setting.
\end{remark}

Let $\mathcal{A}$ denote the collection of all admissible processes $X^u(\cdot)$ arising from such controls. 
Our goal is to find an optimal triple $(X^{u^*}, \tau^*, d^*)$, where $X^{u^*} \in \mathcal{A}$, $\tau^* \in \mathcal{T}^{X^{u^*}}$, and $d^*$ is an $\mathcal{F}^{X^{u^*}}(\tau^*)$--measurable random variable $d^*: \Omega \to \mathcal{D}\subseteq [0,1]$, that minimizes the expected cost
\begin{equation}\label{def_expected_loss_X}
    J\big(X^u(\cdot), \tau, d\big) 
    \coloneqq
    \mathbb{E} \left[\mathfrak{L}(\theta, d) + \int_0^\tau \varphi(u(s)) \, ds + c\tau \right],
\end{equation}
over all admissible triples $(X^u, \tau, d)$ with $X^u \in \mathcal{A}$, $\tau \in \mathcal{T}^{X^u}$, and $d$ measurable with respect to $\mathcal{F}^{X^u}(\tau)$ and valued in some compact $\mathcal{D} \subseteq [0,1]$.

Here, $c > 0$ is an ``operating cost'' per unit of time, $\varphi: \mathcal{U} \to \mathbb{R}$ is a running cost of control, and $\mathfrak{L}: \{0, 1\} \times [0,1] \to \mathbb{R}_+$ is a loss function, satisfying $\mathfrak{L}(x, y) = 0$ if and only if $x = y$. The set $\mathcal{D}$ represents the allowable values for the final decision $d$ about $\theta$; it may be discrete (e.g., $\mathcal{D} = \{0,1\}$, corresponding to the ``hard'' classification) or permit a continuous range of decisions (e.g., $\mathcal{D} = [0,1]$, corresponding to the ``soft'' classification). 
At this stage, we only ask $\varphi(\cdot)$ to be a measurable function and $d\mapsto \mathfrak{L}(\theta,d)$ to be continuous on $\mathcal{D}$ for each $\theta\in\{0,1\}$. If $\mathcal{D}$ is a discrete subset of $[0,1]$, this is trivially true. Some additional assumptions on $\mathfrak{L}(\cdot,\cdot)$ will be specified later in this section.

To solve the minimization problem \eqref{def_expected_loss_X}, we perform three reductions. First, we reformulate the problem in terms of the posterior probability process. Second, we apply iterated conditioning to eliminate the decision rule $d$ from the objective. Finally, we pass to a ``weak analogue'' of the resulting formulation and embed the problem in a general Markovian setting that accommodates arbitrary initial beliefs.

\subsection{Reformulation in terms of posterior probability}

As is common in sequential testing problems, we introduce the posterior probability process $\Pi^u(\cdot) = (\Pi^u(t))_{t \ge 0}$, defined by
\begin{equation}\label{def_cond_prob_proccess}
    \Pi^u(t) \coloneqq \mathbb{P}\left(\theta = 1 \mid \mathcal{F}^{X^u}(t)\right), \quad 0 \le t < \infty.
\end{equation}
The next lemma summarizes its essential features.

\begin{Lem}\label{lem_filtering}
    The process $\Pi^u(\cdot)$ defined in \eqref{def_cond_prob_proccess} satisfies
    \begin{equation}\label{cond_prob_dynamics}
        \Pi^u(t) = p + \int_0^t u(s)\, \Pi^u(s)(1 - \Pi^u(s))\, dB^u(s), \quad 0 \le t < \infty,
    \end{equation}
    where
    \begin{equation}\label{innovation_process}
        B^u(t) \coloneqq X^u(t) - \int_0^t u(s)\, \Pi^u(s)\, ds, \quad 0 \le t < \infty,
    \end{equation}
    is a $(\mathbb{P}, \mathbb{F}^{X^u})$--Brownian motion known as the innovation process.
\end{Lem}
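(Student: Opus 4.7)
My plan is to follow the standard innovations approach from nonlinear filtering, adapted to this controlled setting. The argument splits naturally into two parts.

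First, I would establish that $B^u(\cdot)$ is an $\mathbb{F}^{X^u}$--Brownian motion via L\'evy's characterization. Writing
\[
B^u(t) \;=\; W(t) + \int_0^t u(s)\bigl(\theta - \Pi^u(s)\bigr)\, ds,
\]
continuity and the quadratic variation $\langle B^u \rangle_t = t$ are immediate. The key step is the $\mathbb{F}^{X^u}$--martingale property: for $0 \le s \le t$, a Fubini exchange (justified by \eqref{eqn:sq.int.u} and the boundedness of $\theta$ and $\Pi^u$) reduces the problem to showing $\mathbb{E}[u(r)(\theta - \Pi^u(r)) \mid \mathcal{F}^{X^u}(s)] = 0$ for $s \le r \le t$. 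By the tower property and the $\mathbb{F}^{X^u}$--progressive measurability of $u$, one has $\mathbb{E}[u(r)\theta \mid \mathcal{F}^{X^u}(r)] = u(r)\, \Pi^u(r)$, so the integrand vanishes.

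Second, I would derive the SDE for $\Pi^u(\cdot)$ via Bayes' rule. Introduce the likelihood ratio
\[
\Phi^u(t) \;=\; \exp\!\left(\int_0^t u(s)\, dX^u(s) - \frac{1}{2}\int_0^t u^2(s)\, ds\right),
\]
and apply the Kallianpur-Striebel formula to obtain $\Pi^u(t) = p\,\Phi^u(t)\big/\bigl(p\,\Phi^u(t) + (1-p)\bigr)$. Writing $\Pi^u = g(\Phi^u)$ for the M\"obius map $g(\phi) = p\phi/(p\phi + 1-p)$, a short computation yields the algebraic identities $\phi\, g'(\phi) = g(\phi)(1 - g(\phi))$ and $\phi^2 g''(\phi) = -2\, g(\phi)^2 (1 - g(\phi))$. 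A direct application of It\^o's formula gives $d\Phi^u = u\, \Phi^u\, dX^u$, so It\^o's formula applied to $g(\Phi^u)$, together with the substitution $dX^u = u\, \Pi^u\, dt + dB^u$ from the first part, causes the drift terms to cancel, leaving the claimed SDE
\[
d\Pi^u(t) \;=\; u(t)\, \Pi^u(t)\bigl(1 - \Pi^u(t)\bigr)\, dB^u(t).
\]

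The main obstacle is the apparent circularity arising from $u(\cdot)$ being $\mathbb{F}^{X^u}$--progressively measurable, which complicates both the Fubini exchange in the first part and the change-of-measure argument underlying the Kallianpur-Striebel formula in the second. These are resolved by exploiting the admissibility conditions laid out in Remark \ref{rem:not.circular.strong.formulation}---working on a canonical path space where $u$ is realized as a non-anticipative functional of $X^u$---and by invoking \eqref{eqn:sq.int.u} (together with $\theta \in \{0,1\}$) to ensure that the Dol\'eans exponential defining the reference measure is a true martingale.
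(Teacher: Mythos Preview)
Your proposal is correct and follows the standard innovations approach. The paper does not actually prove this lemma---it simply cites Theorem~8.1 of Lipster--Shiryaev \cite{LS01}, Chapter~3 of Bain--Crisan \cite{BainCrisan}, and Chapter~6 of Chigansky \cite{Chigansky}---and your two-part sketch (L\'evy's characterization for $B^u$, then the Kallianpur--Striebel representation plus It\^o's formula for $\Pi^u$) is precisely the argument developed in those references.

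One remark on the true-martingale issue you flag for the Dol\'eans exponential: the paper's Appendix~\ref{subsec_proof_of_consistency_pi} (proof of Lemma~\ref{lem:pi.consistent}) re-derives the identity $\Pi^u = pL^u/(pL^u + 1-p)$ in the reverse direction, \emph{assuming} Lemma~\ref{lem_filtering} and then matching SDEs via pathwise uniqueness, explicitly noting that ``we will not impose these additional martingale conditions here and simply regard $L^u(\cdot)$ as the stochastic exponential.'' So the paper never resolves the martingale question either; it relies on the cited references for Lemma~\ref{lem_filtering} and then works purely at the level of SDE uniqueness. Your route through Kallianpur--Striebel is the cleaner and more self-contained one, provided you can indeed justify the change of measure---which, under \eqref{eqn:sq.int.u} alone, may require a localization argument rather than a direct appeal to Novikov.
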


\begin{proof}
    The statements of Lemma \ref{lem_filtering} are well-known results in filtering theory (see e.g., \cite[Theorem 8.1]{LS01}). For detailed proofs, we refer the reader to Chapter 8 of Lipster and Shiryaev \cite{LS01}, Chapter 3 of Bain and Crisan \cite{BainCrisan}, 
    or Chapter 6 of the lecture notes by Chigansky \cite{Chigansky}.
\end{proof}

\begin{remark}
    The filtering theory underlying Lemma~\ref{lem_filtering} hinges on the control $u(\cdot)$ being $\fc^{X^u}$--measurable. This is precisely why we begin with the strong formulation.
\end{remark}

\begin{remark}\label{rem:gen.drift.vol.filtering.eqn}
If, instead of \eqref{controlled_sde}, the observation process $X^u(\cdot)$ satisfies 
\begin{equation}
    X^u(t) = \mu\,\theta \int_0^t u(s)\,ds + \sigma W(t),\quad 0 \le t < \infty \qquad \text{for some $\mu \neq 0, \ \sigma > 0$},
\end{equation}
then the corresponding innovation process 
$
    B^u(t) \coloneqq \frac{1}{\sigma} \left(X^u(t) - \mu \int_0^t u(s)\, \Pi^u(s)\, ds \right), \ 0 \le t < \infty
$
is again a $(\mathbb{P},\mathbb{F}^{X^u})$--Brownian motion; and the filtering equation of Lemma \ref{lem_filtering} reads
\begin{equation}\label{cond_prob_dynamics_scaled}
    \Pi^u(t) = p + \frac{\mu}{\sigma} \int_0^t u(s)\, \Pi^u(s)(1 - \Pi^u(s))\, dB^u(s), \quad 0 \le t < \infty.
\end{equation}
Thus, the only effect of changing the magnitude of the signal and noise parameters $(\mu,\sigma)$ is the multiplicative factor $\mu/\sigma$ in the diffusion coefficient of $\Pi^u(\cdot)$.
\end{remark}

The process $\Pi^u(\cdot)$ has the very nice, so-called ``identifiability,'' property that it is a consistent estimator of $\theta$ if enough signal is generated. We make note of this here and provide the proof for completeness in Appendix \ref{subsec_proof_of_consistency_pi}.

\begin{Lem}\label{lem:pi.consistent}
    Suppose $\lim_{t\to\infty}\int_0^tu(s)^2 \, ds \to\infty$, $\mathbb{P}$-a.s. Then, $\lim_{t\to\infty}\Pi^u(t)=\theta$, $\mathbb{P}$-a.s.
\end{Lem}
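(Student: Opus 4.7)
The plan is to leverage the martingale structure of $\Pi^u(\cdot)$ together with an Itô-isometry argument to conclude that the almost sure limit of $\Pi^u(\cdot)$ is $\{0,1\}$-valued, and then to identify this limit with $\theta$ via Lévy's upward theorem. Since $\Pi^u(\cdot)$ is a $[0,1]$-valued (hence bounded, and therefore uniformly integrable) martingale, Doob's martingale convergence theorem yields the existence, $\mathbb{P}$-a.s., of $\Pi^u_\infty \coloneqq \lim_{t\to\infty}\Pi^u(t)$; Lévy's upward theorem further identifies $\Pi^u_\infty = \mathbb{E}[\mathbf{1}_{\{\theta=1\}}\mid \mathcal{F}^{X^u}(\infty)]$ a.s.

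Next, I would apply Itô's isometry to the SDE \eqref{cond_prob_dynamics}, which is valid because the integrand $u(\cdot)\Pi^u(\cdot)(1-\Pi^u(\cdot))$ is square-integrable on every finite interval by the bound $\Pi^u(1-\Pi^u)\le 1/4$ and the hypothesis \eqref{eqn:sq.int.u}. This yields, for every $t \ge 0$,
$$\mathbb{E}\bigl[(\Pi^u(t)-p)^2\bigr] = \mathbb{E}\!\left[\int_0^t u^2(s)\,(\Pi^u(s))^2(1-\Pi^u(s))^2\,ds\right] \le 1,$$
and monotone convergence then gives $\int_0^\infty u^2(s)(\Pi^u(s))^2(1-\Pi^u(s))^2\,ds < \infty$ almost surely.

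I would now argue by contradiction: suppose the event $E \coloneqq \{\Pi^u_\infty(1-\Pi^u_\infty) > 0\}$ has positive $\mathbb{P}$-measure. On $E$, path continuity and the convergence $\Pi^u(s)\to \Pi^u_\infty$ imply that $(\Pi^u(s))^2(1-\Pi^u(s))^2$ is bounded below by some strictly positive (random) constant for all sufficiently large $s$. Combined with the hypothesis $\int_0^\infty u^2(s)\,ds = \infty$ a.s., this forces $\int_0^\infty u^2(s)(\Pi^u(s))^2(1-\Pi^u(s))^2\,ds = \infty$ on $E$, contradicting the a.s. finiteness established above. Hence $\Pi^u_\infty \in \{0,1\}$ a.s.

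Finally, writing $\Pi^u_\infty = \mathbf{1}_A$ for some $A \in \mathcal{F}^{X^u}(\infty)$ and testing the Lévy identity $\mathbf{1}_A = \mathbb{E}[\mathbf{1}_{\{\theta=1\}}\mid \mathcal{F}^{X^u}(\infty)]$ against $\mathbf{1}_A$ and $\mathbf{1}_{A^c}$ yields $\mathbb{P}\bigl(\{\theta=1\}\triangle A\bigr)=0$, whence $\theta = \mathbf{1}_A = \Pi^u_\infty$ a.s. I expect the main obstacle to be the contradiction step, where the exceptional null sets arising from the Itô-isometry bound, the martingale convergence, and the hypothesis on $\int_0^\infty u^2\,ds$ must all be intersected before one can reason pathwise; the remaining steps are standard applications of filtering- and martingale-theoretic tools.
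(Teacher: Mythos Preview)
Your argument is correct, and it takes a genuinely different route from the paper's proof. The paper works with the explicit likelihood-ratio process $L^u(t)=\exp\{\int_0^t u\,dX^u-\tfrac12\int_0^t u^2\,ds\}$, establishes the representation $\Pi^u=pL^u/(pL^u+1-p)$, and then conditions on $\theta$: on $\{\theta=1\}$ (resp.\ $\{\theta=0\}$) the exponent becomes $\int_0^t u\,dW \pm \tfrac12\int_0^t u^2\,ds$, and the strong law of large numbers for local martingales forces $L^u(t)\to\infty$ (resp.\ $0$), hence $\Pi^u(t)\to\theta$. By contrast, you stay entirely on the posterior side: bounded-martingale convergence gives the limit, the It\^o-isometry bound on the quadratic variation of $\Pi^u$ forces the limit into $\{0,1\}$, and L\'evy's upward theorem identifies it with $\theta$. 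Your approach is more self-contained---it never appeals to the likelihood-ratio formula or to the conditional measures $\mathbb{P}^0,\mathbb{P}^1$---and would transfer more readily to filtering settings where an explicit Radon--Nikod\'ym derivative is unavailable. The paper's approach, on the other hand, yields the likelihood-ratio representation as a by-product, which it reuses later (e.g., in the proof of Proposition~\ref{prop:stat.prop}).
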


We can now recast the problem of minimizing \eqref{def_expected_loss_X} in terms of the posterior process $\Pi^u(\cdot)$. Specifically, we apply the tower property of conditional expectations to rewrite the cost as
\begin{equation}
    \begin{split}
        \mathbb{E} \bigg[
            \mathfrak{L}(\theta, d)&+\int_0^\tau \varphi(u(s)) \, ds + c\tau
        \bigg]
        =
        \mathbb{E}\left[
            \mathbb{E} \left[
                \mathfrak{L}(\theta, d)+\int_0^\tau \varphi(u(s)) \, ds + c\tau
            \Big| \, \mathcal{F}^{X^u}(\tau)
        \right]\right]
        \\&=
        \mathbb{E}\left[
            \mathbb{E} \left[
                \mathfrak{L}(1, d) \mathbf{1}_{\{\theta = 1\}} +
                \mathfrak{L}(0, d) \mathbf{1}_{\{\theta = 0\}}
                \,\Big|\, \mathcal{F}^{X^u}(\tau)
            \right]
            +
            \int_0^\tau \varphi(u(s)) \, ds + c\tau
        \right]
        \\&=
        \mathbb{E}\left[
            \mathfrak{L}(1, d)\, \Pi^u(\tau) + \mathfrak{L}(0, d)\, (1 - \Pi^u(\tau))
            +
            \int_0^\tau \varphi(u(s)) \, ds + c\tau
        \right].
    \end{split}
\end{equation}

Next, we observe that once the stopping time $\tau$ is fixed, the optimal decision rule $d^*$ should minimize the function $d \mapsto \mathfrak{L}(1, d)\, \pi + \mathfrak{L}(0, d)\, (1 - \pi)$, evaluated at $\pi = \Pi^u(\tau)$. Since $\mathfrak{L}(1,\cdot)$ and $\mathfrak{L}(0,\cdot)$ are continuous on $\mathcal{D}$, the function
\begin{equation}
    f(\pi,d) \coloneqq \mathfrak{L}(1,d)\, \pi + \mathfrak{L}(0,d)\, (1 - \pi)
\end{equation}
is a \textit{Carathéodory} function: Borel measurable in its first argument and continuous in its second. By the compactness of $\mathcal{D}$ and Theorem 18.19 of Aliprantis and Border \cite{guide2006infinite}, there exists a measurable selector $h(\pi) \in \argmin_{d \in \mathcal{D}} f(\pi,d)$. That is, we may write $d^* = h(\Pi^u(\tau))$ for some measurable function $h: (0,1) \to \mathcal{D}$. As a result, the expected cost simplifies to
\begin{equation}\label{objective_in_terms_of_pi}
    \mathbb{E}\left[
        g(\Pi^u(\tau)) + \int_0^\tau \varphi(u(s)) \, ds + c\tau
    \right],
\end{equation}
where $g: (0,1) \to \mathbb{R}_+$ is defined by $g(\pi) \coloneqq f(\pi, h(\pi))$. Importantly, since $\pi \mapsto f(\pi, d)$ is affine for each fixed $d$, the function $g(\cdot)$ is concave as the pointwise minimum of affine functions. In addition, by using the representation \eqref{objective_in_terms_of_pi} we can formally allow stopping at infinity by defining:
\begin{equation}g(\Pi^u(\infty)) + \int_0^\infty (\varphi(u(s))+c)ds=\liminf_{t\to\infty}\left[g(\Pi^u(t)) + \int_0^t (\varphi(u(s))+c)ds\right].\end{equation}

Although the new objective \eqref{objective_in_terms_of_pi} is written in terms of the posterior process $\Pi^u(\cdot)$, the optimization is carried out over control processes and stopping times adapted to the filtration generated by $X^u(\cdot)$. If one assumes that the control process $u(\cdot)$ is strictly positive (or strictly negative), then standard results (e.g., those used in the proof of Lemma~\ref{lem_filtering}) imply that $\Pi^u(\cdot)$ and $X^u(\cdot)$ generate the same filtration. This would justify reducing the problem to one of minimizing \eqref{objective_in_terms_of_pi} over controls and stopping times adapted to $\mathbb{F}^{\Pi^u}$. To maintain generality, we avoid imposing this assumption on admissible controls. This motivates a reformulation in the ``weak'' sense. As we show later, this transition is without loss of generality: the optimal values and policies arising from the weak and strong formulations will be equivalent.

\subsection{Weak Markovian formulation}\label{subsec_weak_formulation}

We now work on an \emph{arbitrary} filtered probability space $(\Omega^u, \mathcal{F}^u, \mathbb{F}^u = (\mathcal{F}^u(t))_{t \ge 0}, \mathbb{P}^u)$ satisfying the usual conditions that supports a $(\mathbb{P}^u, \mathbb{F}^u)$--Brownian motion $B^u(\cdot) = (B^u(t))_{t \ge 0}$. We denote by $\mathbb{E}^u[\, \cdot \, ]$ expectations with respect to the measure $\mathbb{P}^u$. While some notation introduced here is reused from the strong formulation above, we note that it refers to an entirely separate construction.

For each $\pi \in [0, 1]$, we define the controlled process $\Pi^u_\pi(\cdot) = (\Pi^u_\pi(t))_{t \ge 0}$ taking values in $[0, 1]$, and governed by the dynamics 
\begin{equation}\label{controlled_dynamicccs_pi}
    \Pi^u_\pi(t) = \pi + \int_0^t u(s)\, \Pi^u_\pi(s)\, (1-\Pi^u_\pi(s)) \, dB^u(s), \quad 0 \le t < \infty,
\end{equation}
where $u(\cdot) = (u(t))_{t \ge 0}$ is an $\mathbb{F}^u$--progressively measurable control process taking values in a measurable subset $\mathcal{U} \subseteq \mathbb{R}\setminus\{0\}$ such that the stochastic integral in \eqref{controlled_dynamicccs_pi} is well defined. As in Remark \ref{rem:not.circular.strong.formulation}, we emphasize that this construction is not circular and deliberately broad: it defines a large admissible class which includes, but is not limited to, progressively measurable $\mathcal{U}$-valued processes satisfying \eqref{eqn:sq.int.u}. For definiteness, one may impose \eqref{eqn:sq.int.u} as in Section \ref{subsec_strong_formlation}, although this condition plays no further role in the analysis.

We denote by $\mathcal{A}(\pi)$ the set of all “admissible” processes $\Pi^u_\pi(\cdot)$ arising as weak solutions of \eqref{controlled_dynamicccs_pi}. By this we mean that each element of $\mathcal{A}(\pi)$ is a quadruple
\begin{equation}
  \bigl((\Omega^u,\mathcal{F}^u,\,\mathbb{F}^u,\mathbb{P}^u),\,B^u(\cdot),\,u(\cdot),\,\Pi^u_\pi(\cdot)\bigr),
\end{equation}
where (i) $(\Omega^u,\mathcal{F}^u,\mathbb{F}^u, \mathbb{P}^u)$ is a filtered probability space supporting a $(\mathbb{P}^u,\mathbb{F}^u)$--adapted Brownian motion $B^u(\cdot)$, (ii) $u(\cdot)$ is an $\mathbb{F}^u$--progressively measurable, $\mathcal{U}$--valued process, and (iii) $\Pi^u_\pi(\cdot)$ is $\mathbb{F}^u$--adapted and satisfies the SDE above. Note that $\mathcal{A}(\pi)$ is nonempty, since for any constant control $u(\cdot)\equiv u\in\mathcal{U}$ one trivially obtains a weak solution. We will often abuse notation and write $\Pi^u_\pi(\cdot)\in\mathcal{A}(\pi)$,
with the understanding that this refers to the full quadruple above. Finally, for each $\Pi^u_\pi(\cdot)\in\mathcal{A}(\pi)$, we denote by $\mathcal{T}^u$ the collection of all stopping times with respect to $\mathbb{F}^u$.

With the above notation, we now consider the problem of finding a process $\Pi^{u^*}_\pi(\cdot) \in \mathcal{A}(\pi)$ and an associated stopping time $\tau^* \in \mathcal{T}^u$ such that the pair $(\Pi^{u^*}_\pi(\cdot), \tau^*)$ minimizes the expected cost
\begin{equation}\label{def_expected_loss_pi}
    J\big(\pi, \Pi^u_\pi(\cdot), \tau\big) 
    \coloneqq
    \mathbb{E}^u \left[
        g\big(\Pi^u_\pi(\tau)\big)
        + \int_0^\tau \varphi(u(s)) \, ds
        + c\tau
    \right],
\end{equation}
over all $\Pi^u_\pi(\cdot) \in \mathcal{A}(\pi)$ and $\tau \in \mathcal{T}^u$. Here, $c > 0$ is a constant ``operating cost'' per unit time, $\varphi: \mathcal{U} \to \mathbb{R}$ is the running cost of control, and $g: [0, 1] \to \mathbb{R}_+$ is the terminal penalty function. It is worth emphasizing once more that $\varphi(\cdot)$ may be \emph{any} measurable function on $\mathcal{U}$. 

We now define the second-order differential operator $\mathcal{L}$ by
\begin{equation}
    (\mathcal{L}f)(\pi) \coloneqq \pi^2 (1 - \pi)^2 f''(\pi), \quad f \in C^2((0, 1)),
\end{equation}
which coincides with the generator of the posterior process with constant control $ u(\cdot) \equiv 2 $. With this notation, we impose the following structural assumptions on $g(\cdot)$:

\begin{enumerate}[label = \textbf{(A\arabic*)}, resume]
    \item \label{ass_1}
    The function $g(\cdot)$ is symmetric about $1/2$, concave, and satisfies $g(0) = g(1) = 0$. Moreover, either:
    \begin{enumerate}
        \item $g(\pi) \propto \pi \wedge (1 - \pi)$, or
        \item $g(\cdot) \in C^2((0,1))$ and the function $(\mathcal{L}g)(\cdot)$ is strictly decreasing on $(0,1/2)$ and strictly increasing on $(1/2,1)$.
    \end{enumerate}
\end{enumerate}

Recall that our assumptions on the loss function $\mathfrak{L}(\cdot, \cdot)$ already imply that $g(\cdot)$ is concave and $g(0)=g(1)=0$. Assumption \ref{ass_1} introduces several additional conditions on $g(\cdot)$ to facilitate the analysis. These assumptions are satisfied by a broad class of commonly used loss functions and, in many cases, are made primarily for convenience of exposition rather than necessity.

In particular, while we assume $g(\cdot)$ is symmetric about $1/2$, all of our results remain valid for asymmetric alternatives (e.g., $g(\pi) = a\pi \wedge b(1 - \pi)$ for $a, b > 0$), with only minor (but tedious) modifications to the arguments. For the sake of clarity and to avoid technical bookkeeping, we focus on the symmetric case. 
Aside from sufficient regularity, the only essential additional assumption is the monotonicity of $(\mathcal{L}g)(\cdot)$, which ensures that the continuation region of the associated stopping problem is an interval---this structure plays a key role in our analysis.

We now list several examples of commonly used loss functions $\mathfrak{L}(\cdot,\cdot)$ whose induced penalties $g(\cdot)$ satisfy the above conditions.  The choice $g(\pi) = \pi \wedge (1 - \pi)$ arises from Shiryaev's classical sequential testing problem \cite{Shiryaev67}, corresponding to the standard Bayesian classification loss $\mathfrak{L}(\theta, d) = \mathbf{1}_{\{d \neq \theta\}}$. Other examples include:
\begin{itemize}
    \item The \textit{cross-entropy loss} $\mathfrak{L}(\theta, d) = -\theta \ln d - (1 - \theta) \ln(1 - d)$, which leads to $g(\pi) = -\pi \ln \pi - (1 - \pi) \ln(1 - \pi)$, with the convention $g(0) = g(1) = 0$;
    \item The \textit{$L^1$ loss} $\mathfrak{L}(\theta, d) = |d - \theta|$, which yields $g(\pi) = \pi \wedge (1 - \pi)$;
    \item The \textit{$L^2$ loss} $\mathfrak{L}(\theta, d) = (d - \theta)^2$, resulting in $g(\pi) = \pi(1 - \pi)$.
\end{itemize}

From now on, we study the problem of minimizing \eqref{def_expected_loss_pi}, and denote the value function of this problem by
\begin{equation}\label{def_value_function}
    V(\pi) \coloneqq \inf\limits_{\substack{\Pi^u_\pi(\cdot) \in \mathcal{A}(\pi) \\ \tau \in \mathcal{T}^u}} J(\pi, \Pi^u_\pi(\cdot), \tau).
\end{equation}

\section{Preliminary Analysis}\label{sec_prem_analysis}

In this section, we begin our analysis of the problem \eqref{def_value_function} by making several conjectures about the structure of its solution. We also introduce necessary notation and prove auxiliary results that will support the main arguments to follow.

We start by formulating a system of variational inequalities that the value function $V(\cdot)$ is expected to satisfy for all $\pi \in [0, 1]$ (wherever $V''(\cdot)$ exists):
\begin{enumerate}[label = \textbf{(\roman*)}]
    \item $g(\pi) - V(\pi) \ge 0$; \label{var_ineq_1}
    \item $\inf\limits_{u \in \mathcal{U}}\left[\frac{1}{2} V''(\pi)\big(u \pi(1-\pi)\big)^2 + \varphi(u) \right] + c \ge 0$; \label{var_ineq_2}
    \item $\left( g(\pi) - V(\pi) \right)\left( \inf\limits_{u \in \cu}\left[\frac{1}{2} V''(\pi)\big(u \pi(1-\pi)\big)^2 + \varphi(u) \right] + c \right) = 0$. \label{var_ineq_3}
\end{enumerate}
Inequality \ref{var_ineq_1} reflects the fact that the value function must be dominated by the loss function $g(\cdot)$, since one can always stop immediately (i.e., choose $\tau \equiv 0$) and pay $g(\pi)$. Inequality \ref{var_ineq_2} serves as a Bellman-type condition that characterizes the evolution of the value function when exerting control. Finally, the complementarity condition \ref{var_ineq_3} asserts that, at each point in time, either it is optimal to stop or the process should be controlled in a way that maintains equality in \ref{var_ineq_2}. This logic can also be used to derive a characterization of the optimal pair $(\Pi^{u^*}(\cdot), \tau^*)$.

A standard approach to solving variational inequalities, i.e., finding a candidate value function $\widehat{V}(\cdot)$ that satisfies \ref{var_ineq_1}--\ref{var_ineq_3}, is to first compute the pointwise minimizer $\widehat{u} = \widehat{u}(\pi)$ of the expression in \ref{var_ineq_2}. Substituting this back into \ref{var_ineq_2} yields an ordinary differential equation (ODE) for $\widehat{V}(\cdot)$. One typically solves this ODE on the continuation region $\mathcal{C}:=\{\pi \in (0,1) : g(\pi) > V(\pi)\}$ and then ``glues'' the resulting solution to $g(\cdot)$ so that the full function satisfies conditions \ref{var_ineq_1} and \ref{var_ineq_3}. However, in our case, this procedure leads to a  non-linear, non-standard differential equation, which is not amenable to analysis.

To circumvent these difficulties, we instead conjecture a specific structure for the value function in the continuation region:
\begin{equation}\label{conjectured_form_of_value_function}
    V''(\pi) = -\alpha \, (\pi (1 - \pi))^{-2}
\end{equation}
for some constant $\alpha > 0$. If this were true, then for $\pi\in\mathcal{C}$ (i.e., when the control is active) the variational inequality \ref{var_ineq_2} simplifies to
\begin{equation}\label{var_ineq_2_simplified}
    \inf_{u \in \mathcal{U}} \left[ -\frac{\alpha}{2} u^2 + \varphi(u) \right] + c \ge 0,
\end{equation}
which implies that the optimal control $u^*(\cdot)$ (when it exists) must be constant in both time and state. That is, $u^*(\cdot) \equiv u^*$ for all $t \ge 0$ and some $u^* \in \mathcal{U}$ that depends on the constant $\alpha$ of \eqref{conjectured_form_of_value_function}.

The simplified inequality \eqref{var_ineq_2_simplified} now provides a way to find the optimal control $u^*(\cdot)$, but only if the value of $\alpha$ in \eqref{conjectured_form_of_value_function} is known. However, under the assumption that the optimal control is constant, we can determine its value through an alternative route. Namely, for a fixed control $u(\cdot) \equiv u$, with $u \in \mathcal{U}$, the expected cost is given by
\begin{equation}\label{simplified_expected_cost}
    J\big(\pi, \Pi^u_\pi(\cdot), \tau\big)
    =
    \mathbb{E}^u \left[
        g\big(\Pi^u_\pi(\tau)\big)
        +
        (\varphi(u) + c) \cdot \tau
    \right].
\end{equation}

Taking yet another leap of faith, we posit that we may restrict to stopping times $\tau$ that mark the first exit time of the posterior process $\Pi^u_\pi(\cdot)$ from an interval $(\delta, 1 - \delta)$, i.e., $\tau = \tau_\delta$ with
\begin{equation}\label{def_hitting_time}
    \tau_\delta \coloneqq \inf\left\{t \ge 0 : \Pi^u_\pi(t) \notin (\delta, 1 - \delta)\right\}.
\end{equation}
This postulate is reasonable when the control $u(\cdot) \equiv u$ is constant, as it reduces \eqref{def_value_function} to the sequential testing problems of \cite{Shiryaev67} and \cite{CampZha24}, in which the optimal stopping rule takes precisely this form for some optimal boundary $\delta$.

Fixing such a stopping rule, for $\pi\in(\delta,1-\delta)$ the cost \eqref{simplified_expected_cost} becomes
\begin{equation}\label{simplified_expected_cost_2}
    \begin{split}
        J\big(\pi,\Pi^u_\pi(\cdot), \tau_\delta\big)
        &=
        g(\delta) + 
        (\varphi(u) + c) \cdot
        \ex^u\big[
            \tau_\delta
        \big]
        \\&=
        g(\delta) + 
        \frac{2(\varphi(u) + c)}{u^2} \left(
            (1 - 2\pi) \ln\left(\frac{\pi}{1-\pi}\right) - (1 - 2\delta) \ln\left(\frac{\delta}{1-\delta}\right)\right),
    \end{split}
\end{equation}
on the strength of the following proposition whose proof is located in Appendix \ref{subsec_proof_of_hitting_time_prop}. On the other hand, if $\pi \notin (\delta, 1 - \delta)$, then $\mathbb{E}^u[\tau_\delta] = 0$ and $J\big(\pi, \Pi^u_\pi(\cdot), \tau_\delta\big) = g(\pi)$. We remark that the value functions of \cite{Shiryaev67} and \cite{CampZha24} do indeed match the form of \eqref{simplified_expected_cost_2} for suitably chosen parameters.

\begin{Prop}\label{prop_hitting_time_expect}
    Fix any $\delta\in(0,1/2)$ and $\pi \in (\delta, 1-\delta)$. Consider the process $\Pi^u_\pi(\cdot)$ of \eqref{controlled_dynamicccs_pi} with constant control $u(\cdot) \equiv u$ for some fixed $u \in \cu$, and let $\tau_\delta$ be defined as in \eqref{def_hitting_time}. Then,
    \begin{equation}\label{delta_hittinig_expectation}
        \ex^u \big[ \tau_\delta \big] = \frac{2}{u^2} \left((1 - 2\pi) \ln\left(\frac{\pi}{1-\pi}\right) - (1 - 2\delta) \ln\left(\frac{\delta}{1-\delta}\right)\right).
    \end{equation}
\end{Prop}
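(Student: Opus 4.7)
The plan is to apply Dynkin's formula with a suitably chosen ``generator‑killing'' function. With constant control $u(\cdot)\equiv u$, the process $\Pi^u_\pi(\cdot)$ is a diffusion whose action on $C^2((0,1))$ functions is given by $(\mathcal{L}^u f)(\pi) := \tfrac{u^2}{2}\pi^2(1-\pi)^2 f''(\pi)$. I would search for $f \in C^2((0,1))$ satisfying $(\mathcal{L}^u f)(\pi) = -1$ on $(\delta, 1-\delta)$ with boundary conditions $f(\delta)=f(1-\delta)=0$; Dynkin's formula should then give $\ex^u[\tau_\delta] = f(\pi)$, which is exactly the claim.

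Finding $f$ reduces to integrating $f''(\pi) = -2/(u^2 \pi^2(1-\pi)^2)$ twice. Using the partial fraction identities $1/(\pi^2(1-\pi)^2) = 1/\pi^2 + 2/(\pi(1-\pi)) + 1/(1-\pi)^2$ and $(1-2\pi)/(\pi(1-\pi)) = 1/\pi - 1/(1-\pi)$, a direct calculation shows that $h(\pi):= (1-2\pi)\ln(\pi/(1-\pi))$ satisfies $h''(\pi) = -1/(\pi^2(1-\pi)^2)$, so the candidate
$f(\pi) = (2/u^2)\bigl[(1-2\pi)\ln(\pi/(1-\pi)) - (1-2\delta)\ln(\delta/(1-\delta))\bigr]$
solves the ODE. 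The condition $f(\delta)=0$ is built in by the subtracted constant, and $f(1-\delta)=0$ follows from $h(1-\delta)=h(\delta)$, which one sees immediately because both $1-2\pi$ and $\ln(\pi/(1-\pi))$ change sign under $\pi\leftrightarrow 1-\pi$.

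With $f$ in hand, I would apply Itô's formula to $f(\Pi^u_\pi(t\wedge\tau_\delta))$. Since the stopped process remains in $[\delta, 1-\delta]$, both $f'$ and the diffusion coefficient $u\Pi^u_\pi(1-\Pi^u_\pi)$ are uniformly bounded on $[0,\tau_\delta]$, so the stochastic-integral term is a true martingale with zero expectation. Combined with $\mathcal{L}^u f \equiv -1$, this yields $\ex^u[f(\Pi^u_\pi(t\wedge\tau_\delta))] = f(\pi) - \ex^u[t\wedge\tau_\delta]$. Boundedness of $f$ on $[\delta,1-\delta]$ bounds the left side uniformly in $t$, so monotone convergence forces $\ex^u[\tau_\delta]<\infty$, in particular $\tau_\delta<\infty$ almost surely. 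Letting $t\to\infty$ with dominated convergence on the left and using $f(\Pi^u_\pi(\tau_\delta))=0$ (since $\Pi^u_\pi(\tau_\delta)\in\{\delta,1-\delta\}$) gives $\ex^u[\tau_\delta] = f(\pi)$, which is \eqref{delta_hittinig_expectation}.

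The only substantive work is verifying the ODE $f''(\pi) = -2/(u^2 \pi^2(1-\pi)^2)$, and I expect this algebraic step---organizing the cancellations between the logarithmic and rational terms---to be the main obstacle; once the function $f$ is in hand, everything else is a routine application of Dynkin's formula to a bounded, stopped diffusion.
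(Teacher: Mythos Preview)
Your proof is correct and takes a somewhat different route from the paper. The paper invokes the scale--speed representation of exit-time expectations from Karatzas--Shreve (formula (5.59) on p.~344), identifies the scale function $p(\pi)=\pi$ and speed measure $m(d\pi)=2\,d\pi/(u\pi(1-\pi))^2$, and then evaluates the resulting double integral directly. You instead solve the Dirichlet problem $\mathcal{L}^u f=-1$ on $(\delta,1-\delta)$ with $f(\delta)=f(1-\delta)=0$ by inspection and apply Dynkin/It\^o to the stopped process, using boundedness on the compact interval to justify the martingale and limit steps. The two arguments are of course equivalent in spirit---the scale/speed formula is precisely the integral representation of the solution to your ODE---but your approach is slightly more self-contained and, in this particular paper, actually shorter: the key identity $\Psi''(\pi)=-1/(\pi^2(1-\pi)^2)$ is already recorded in \eqref{psi_derivatives}, so the ``main obstacle'' you flag (verifying $f''$) is a one-line citation rather than a fresh calculation. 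The paper's route, by contrast, has the advantage of being a direct instantiation of a standard reference, requiring no separate finiteness or convergence justification.
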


By the form of the expectation in \eqref{delta_hittinig_expectation}, we observe that
\begin{equation}
(1 - 2\pi) \ln\left(\frac{\pi}{1 - \pi}\right) - (1 - 2\delta) \ln\left(\frac{\delta}{1 - \delta}\right) \ge 0, \quad \forall\, \pi \in (\delta, 1 - \delta).
\end{equation}
In view of \eqref{simplified_expected_cost_2}, this suggests that the optimal control $u^*$ should minimize the function
\begin{equation}\label{definition_eta_function}
    \eta(u) \coloneqq \frac{\varphi(u) + c}{u^2}.
\end{equation}
Whereas, if the function $\eta(\cdot)$ does not attain its infimum, i.e., if $M \coloneqq \inf_{u \in \mathcal{U}} \eta(u)$ satisfies $M < \eta(u)$ for all $u \in \mathcal{U}$, then it is natural to expect that the optimal control process does not exist. Indeed, for any admissible $u(\cdot)$, one could always construct an alternative control that more closely approximates $M$, leading to strict improvement in cost.

As it turns out, the conjectures above do in fact capture the true structure of the solution. Before presenting a rigorous justification, we introduce some additional notation and prove a key auxiliary result. For convenience of exposition, define the function
\begin{equation}\label{def_psi_function}
    \Psi(\pi) \coloneqq (1 - 2\pi) \ln\left(\frac{\pi}{1 - \pi}\right), \quad 0 < \pi < 1,
\end{equation}
whose derivatives are given by
\begin{equation}\label{psi_derivatives}
    \Psi'(\pi) = \frac{1 - 2\pi}{\pi(1 - \pi)} - 2 \ln\left( \frac{\pi}{1 - \pi} \right), 
    \qquad
    \Psi''(\pi) = -\frac{1}{\pi^2 (1 - \pi)^2}.
\end{equation}
In particular, $\Psi(\cdot)$ is symmetric about $1/2$, concave, and non-positive on $(0,1)$. For each fixed constant $K > 0$, we also define the function
\begin{equation}\label{def_H_function}
    H(\pi) = H(\pi; K) \coloneqq g(\pi) - K \Psi(\pi),
\end{equation}
which plays a central role in what follows. The next lemma, adapted from \cite{CampZha24}, is crucial to the analysis in the next section. 

\begin{Lem}[Corollary 2.4 of Campbell and Zhang '25]\label{lem_properties_of_H}
    Fix $K > 0$, and consider the function $H(\cdot) = H(\,\cdot\,; K)$ defined in \eqref{def_H_function}. There exists $\pi^* = \pi^*(K) \in (0, 1/2]$ such that $H(\cdot)$ is strictly convex on $(0, \pi^*) \cup (1 - \pi^*, 1)$ and strictly concave on $(\pi^*, 1 - \pi^*)$. In particular, if
    \begin{itemize}
        \item[(a)] $g(\pi) \propto \pi \wedge (1 - \pi)$, or
        \item[(b)] $g''(1/2) \ge -16K$,
    \end{itemize}
    then $\pi^* = 1/2$. Otherwise, $\pi^* < 1/2$. Moreover, in case (b), $H(\cdot)$ is strictly convex on the entire interval $(0, 1)$.
\end{Lem}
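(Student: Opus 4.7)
The plan is to analyze the sign of $H''(\cdot)$ on $(0, 1/2]$ and extend by symmetry, using that both $g(\cdot)$ and $\Psi(\cdot)$ are symmetric about $1/2$, hence so is $H(\cdot)$. Case (a), $g(\pi) \propto \pi \wedge (1-\pi)$, is immediate: since $g''(\pi) = 0$ on $(0, 1/2) \cup (1/2, 1)$, the expression for $\Psi''$ in \eqref{psi_derivatives} gives $H''(\pi) = -K\Psi''(\pi) = K/[\pi^2(1-\pi)^2] > 0$ on both pieces. Taking $\pi^* = 1/2$ makes the concave region $(\pi^*, 1-\pi^*)$ empty, matching the claim.

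For the remaining case $g \in C^2((0,1))$, I would introduce
\[\phi(\pi) \coloneqq \pi^2(1-\pi)^2\, H''(\pi) = (\mathcal{L}g)(\pi) + K,\]
which has the same sign as $H''(\pi)$ on $(0,1)$ and reduces the problem to a one-dimensional sign analysis of $\phi$. By the second alternative in assumption \ref{ass_1}, $(\mathcal{L}g)(\cdot)$ is strictly decreasing on $(0, 1/2)$ and strictly increasing on $(1/2, 1)$, and these monotonicity properties transfer to $\phi$. In particular, $\phi$ is minimized on $(0, 1)$ at $\pi = 1/2$, with value $\phi(1/2) = \tfrac{1}{16} g''(1/2) + K$. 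If $g''(1/2) \geq -16K$ (case (b)), this minimum is non-negative and strict monotonicity forces $\phi(\pi) > 0$ for every $\pi \neq 1/2$; hence $H''$ is strictly positive off the single point $\pi = 1/2$, which is enough to conclude strict convexity of $H$ on the entire interval $(0, 1)$, and $\pi^* = 1/2$.

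Otherwise $\phi(1/2) < 0$, while the boundary behaviour of $g$ near $0$ (concavity together with $g(0) = 0$ and the regularity of $g$ forces $\pi^2(1-\pi)^2 g''(\pi) \to 0$ as $\pi \to 0^+$, which one verifies for the canonical losses listed in Section \ref{subsec_weak_formulation}) yields $\phi(0^+) = K > 0$. Continuity and strict monotonicity of $\phi$ on $(0, 1/2)$ then produce, via the intermediate value theorem, a unique $\pi^* \in (0, 1/2)$ with $\phi(\pi^*) = 0$, $\phi > 0$ on $(0, \pi^*)$, and $\phi < 0$ on $(\pi^*, 1/2]$. Extending by symmetry gives the advertised strict convexity on $(0, \pi^*) \cup (1-\pi^*, 1)$ and strict concavity on $(\pi^*, 1-\pi^*)$. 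The main technical point—and the reason the result is cited from \cite{CampZha24}—is precisely the boundary verification $(\mathcal{L}g)(0^+) \geq 0$, which prevents the root $\pi^*$ from drifting to $0$; once it is in hand, the remainder is a direct inspection of the scalar sign pattern of $\phi$.
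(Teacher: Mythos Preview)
Your proposal is correct and follows essentially the same route as the paper: both analyze the sign of $H''$ via the auxiliary quantity $\pi^2(1-\pi)^2 H''(\pi) = (\mathcal{L}g)(\pi) + K$, invoke the monotonicity of $(\mathcal{L}g)$ from Assumption~\ref{ass_1}(b), and appeal to \cite{CampZha24} for the boundary limit $(\mathcal{L}g)(0^+)=0$. Your write-up is slightly more explicit in naming the auxiliary function $\phi$ and in justifying why a single vanishing point of $H''$ does not obstruct strict convexity, but the underlying argument is identical.
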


\begin{proof}
Case (a) follows directly from the strict concavity of $\Psi(\cdot)$. Since $g(\cdot)$ is piecewise affine, by directly differentiating, $H''(\pi) = -K \Psi''(\pi) > 0$ for all $\pi \in (0,1) \setminus \{1/2\}$.

For more general $g(\cdot)$ satisfying Assumption~\ref{ass_1}(b), the result follows from \cite[Corollary~2.4]{CampZha24} and the surrounding discussion. While we refer the reader to that work for full details, we briefly outline the main idea. The convexity and concavity of $H(\cdot)$ is governed by the sign of $H''(\cdot)$, or equivalently, of the function
\begin{equation}
 g''(\pi)\, \pi^2(1 - \pi)^2 + K.
\end{equation}
Under Assumption~\ref{ass_1}(b), this expression is strictly decreasing on $(0, 1/2)$ (and increasing on $(1/2, 1)$), so the sign of $H''(\pi)$ can change at most once on each interval. Moreover, since $g(0) = g(1) = 0$, it follows from \cite[Lemma~2.2]{CampZha24} that
\begin{equation}
    \lim_{\pi \downarrow 0} g''(\pi)\, \pi^2 (1 - \pi)^2 = \lim_{\pi \uparrow 1} g''(\pi)\, \pi^2 (1 - \pi)^2 = 0.
\end{equation}
Thus, $H''(\pi) > 0$ near $0$ and $1$, so $H(\cdot)$ is strictly convex near the endpoints. On the other hand, $H''(1/2) < 0$ if and only if $g''(1/2) < -16K$, in which case $H(\cdot)$ is strictly concave in a neighborhood of $1/2$. The result follows.
\end{proof}

We record here a few consequences of the preceding result for later reference. 

\begin{Cor}\label{cor_H_function}
    Fix any constant $K > 0$. The equation $g'(\pi) = K\Psi'(\pi)$, equivalently, $H'(\pi; K) = 0$, has at most one solution $\pi_0 = \pi_0(K)<\pi^*(K)$ in the interval $(0, 1/2)$. A solution exists if and only if either
    \begin{itemize}
        \item[(a)] $g(\pi) \propto \pi \wedge (1 - \pi)$, or
        \item[(b)] $g(\cdot) \in C^2\big((0, 1)\big)$ and $ g''(1/2) < -16K$.
    \end{itemize}
    In this case, $H(\,\cdot\,; K)$ attains its minimum at $\pi_0$ and $1 - \pi_0$.
\end{Cor}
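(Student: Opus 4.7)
The plan is to leverage the convexity/concavity structure from Lemma \ref{lem_properties_of_H}, together with two preliminary observations: (i) by symmetry of $g(\cdot)$ and $\Psi(\cdot)$ about $1/2$, $H$ is itself symmetric about $1/2$, so $H'(1/2) = 0$ whenever this derivative exists; and (ii) since $\Psi(\pi) \to -\infty$ as $\pi \to 0^+$ while $g(0^+) = 0$, we have $H(0^+) = +\infty$. Uniqueness of $\pi_0$ will fall out of strict convexity of $H$ on the relevant subinterval.

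Under case (a) of the corollary, Lemma \ref{lem_properties_of_H} gives $\pi^*(K) = 1/2$ and strict convexity of $H$ on $(0, 1/2)$. Since $g'(\pi) \equiv c > 0$ on $(0, 1/2)$ and $\Psi'(1/2) = 0$, the one-sided derivative satisfies $H'(1/2^-) = c > 0$; combined with $H(0^+) = +\infty$, strict monotonicity of $H'$ on $(0,1/2)$ forces a unique $\pi_0 \in (0, 1/2)$ with $H'(\pi_0) = 0$, and $\pi_0$ is the minimum of $H$ on $(0, 1/2]$.

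Under case (b), Lemma \ref{lem_properties_of_H} gives $\pi^*(K) < 1/2$, with $H \in C^2$ strictly convex on $(0, \pi^*)$ and strictly concave on $(\pi^*, 1-\pi^*)$. Since $H'(1/2) = 0$, concavity on $(\pi^*, 1/2]$ forces $H'(\pi) > 0$ for every $\pi \in [\pi^*, 1/2)$; in particular $H'(\pi^*) > 0$ by continuity. On the subinterval $(0, \pi^*]$, strict convexity together with $H(0^+) = +\infty$ and $H(\pi^*) < \infty$ guarantees that $H$ attains its minimum at some $\pi_0$; the minimizer cannot sit at $\pi^*$ (else $H'(\pi^*) \le 0$, contradicting $H'(\pi^*) > 0$), so $\pi_0 \in (0, \pi^*)$ and $H'(\pi_0) = 0$. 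Uniqueness follows from strict convexity on $(0, \pi^*)$.

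For the converse, if neither (a) nor (b) holds then $g \in C^2$ and $g''(1/2) \ge -16K$, whence Lemma \ref{lem_properties_of_H} yields strict convexity of $H$ on all of $(0,1)$; the unique critical point is then $\pi = 1/2 \notin (0, 1/2)$, so no solution exists in $(0, 1/2)$. The global-minimum claim in both surviving cases follows from symmetry together with the convex/concave structure: $H(1 - \pi_0) = H(\pi_0)$ and $H$ strictly exceeds $H(\pi_0)$ elsewhere on $(0,1)$, using strict convexity on $(0, \pi^*) \cup (1-\pi^*, 1)$ and the fact that $1/2$ is a local maximum on $(\pi^*, 1-\pi^*)$ (or identifying the extreme values explicitly in case (a)). The main obstacle I anticipate is the behavior of $H'$ near $\pi = 0$ in case (b), since $g'(0^+)$ may be $+\infty$ (e.g.\ cross-entropy); the clean way around it is to avoid evaluating $H'(0^+)$ at all and instead drive the existence-of-minimum argument on $(0, \pi^*]$ purely from $H(0^+) = +\infty$ and strict convexity.
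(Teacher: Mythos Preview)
Your proof is correct and follows essentially the same approach as the paper's: both lean on Lemma~\ref{lem_properties_of_H} for the convexity/concavity structure, use symmetry to pin down $H'(1/2)=0$ (or the one-sided analogue in case (a)), invoke $H(0^+)=+\infty$ to force an interior minimizer, and handle the converse via the strict convexity of $H$ on all of $(0,1)$ when $g''(1/2)\ge -16K$. The only cosmetic differences are that in case (a) the paper argues directly via $\Psi'(0^+)=+\infty$ and monotonicity of $\Psi'$ rather than via $H(0^+)=+\infty$, and in case (b) the paper phrases the existence of $\pi_0$ as ``$H$ admits local minima at $\pi_0\in[0,\pi^*)$'' and rules out the boundary, whereas you make the step $H'(\pi^*)>0$ explicit---these are equivalent routes to the same conclusion.
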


\begin{proof}
By Lemma~\ref{lem_properties_of_H}, if neither (a) nor (b) holds, then $H(\,\cdot\,; K)$ has a unique minimum. By symmetry, this minimum must occur at $\pi = 1/2$, implying that $H'(\,\cdot\,; K)$ does not vanish on $(0, 1/2)$.

Now suppose that (a) or (b) holds. Lemma~\ref{lem_properties_of_H} asserts that $H(\cdot)$ has at most three critical points in $(0,1)$: namely, $\pi_0 \in (0, \pi^*]$, $1/2$, and $1 - \pi_0 \in [1 - \pi^*, 1)$. Under (a), $g'(\pi)$ is a positive \emph{constant} on $(0,1/2)$. Since $\Psi'(\cdot)$ is positive and strictly decreasing on $(0,1/2)$ with $\Psi'(1/2) = 0$ and $\lim_{\pi\downarrow 0} \Psi'(\pi) = \infty$, there is a unique solution $\pi_0(K)<\pi^*(K)=1/2$ to $g'(\pi)=K\Psi'(\pi)$ for any $K>0$. Under (b), since $H(\cdot)$ is a $C^2(0,1)$ function that is strictly convex on $(0, \pi^*)$ and $(1 - \pi^*, 1)$, and strictly concave on $(\pi^*, 1 - \pi^*)$, the function admits local minima at some $\pi_0\in[0,\pi^*)$ and $1 - \pi_0\in(1-\pi^*,1]$. As $g(\cdot)$ is bounded and $g(0) = g(1) = 0$, we have $\lim_{\pi \downarrow 0} H(\pi) = \lim_{\pi \uparrow 1} H(\pi) = \infty$, so no minimizer occurs at these outer boundaries. Then, by symmetry, these must also be global minima on $(0,1)$. The result follows.
\end{proof}

\section{Main Results}\label{sec_main_results}

We collect all of our main results up front in a single theorem. The remainder of this section will build up to its proof. 

\begin{Th}\label{thm_main}
Consider the stochastic control problem with discretionary stopping defined in \eqref{def_value_function}. Let $\eta(\cdot)$ be as in \eqref{definition_eta_function}, and define $M \coloneqq \inf_{u \in \mathcal{U}} \eta(u) < \infty$. For $M > 0$, let $A^*=A^*(M)$ denote the smallest solution in $(0, 1/2]$ to the equation $g'(\pi) = 2M \Psi'(\pi)$; otherwise, set $A^* = 0$.

\smallskip

\noindent The value function admits the following three-part characterization:
\begin{enumerate}
    \item[(1)] If $M > 0$, then the value function is given by $V(\pi) = \widehat{V}(\pi; 2M, A^*)$,
    where
    \begin{equation}\label{value_function_std}
        \widehat{V}(\pi; \alpha, \delta) \coloneqq
        \begin{cases}
            \alpha \cdot \big(\Psi(\pi) - \Psi(\delta)\big) + g(\delta), & \pi \in (\delta, 1 - \delta),\\
            g(\pi), & \pi \notin (\delta, 1 - \delta).
        \end{cases}
    \end{equation}
    
    \item[(2)] If $M = 0$, the value function is identically zero: $V(\pi) \equiv 0$ for all $\pi \in [0, 1]$.
    
    \item[(3)] If $M < 0$, the value function is identically $-\infty$: $V(\pi) \equiv -\infty$ for all $\pi \in [0, 1]$.
\end{enumerate}

\smallskip

\noindent Moreover, the optimal policies exhibit the following structure:
\begin{itemize}
    \item If $M\geq0$ and the infimum of $\eta(\cdot)$ is attained at some $u^* \in \mathcal{U}$, then the constant control $u^*(t) \equiv u^*$ is optimal, as is the stopping time $\tau_{A^*}$ defined in \eqref{def_hitting_time}.

    \item If $M\geq0$ and the infimum of $\eta(\cdot)$ is not attained, then no optimal control-stopping pair exists for initial beliefs $\pi \in (A^*, 1 - A^*)$. On the other hand, for $\pi \notin (A^*, 1 - A^*)$, an optimal stopping time is $\tau_{A^*} \equiv 0$, and therefore any control is optimal.

    \item If $M<0$, then any constant control $u^*(t) \equiv u^*$ satisfying $\eta(u^*)<0$ with $\tau^*\equiv \infty$ is optimal.
\end{itemize}

\smallskip

\noindent Irrespective of attainment, for any $\varepsilon > 0$ there exists an $\varepsilon$-optimal strategy consisting of a constant control $u(t) \equiv \mathfrak{u}\in\mathcal{U}$ and a stopping time $\tau_\delta$ of the form \eqref{def_hitting_time}, where $\mathfrak{u}$ is chosen along a minimizing sequence for $\eta(\cdot)$ and $\delta > 0$ is sufficiently close to $A^*$.
\end{Th}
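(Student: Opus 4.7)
The plan is a verification-plus-construction proof organized around the candidate $\widehat{V}(\pi)=\widehat{V}(\pi;2M,A^*)$ of \eqref{value_function_std}, with the degenerate cases $M\leq 0$ treated separately at the end. First I would verify that $\widehat{V}$ solves the variational inequalities \ref{var_ineq_1}--\ref{var_ineq_3}. On the continuation interval $(A^*,1-A^*)$ one has $\widehat{V}''(\pi)=2M\Psi''(\pi)=-2M/\bigl(\pi(1-\pi)\bigr)^{2}$, so the bracket inside the infimum of \ref{var_ineq_2} collapses to $u^{2}\bigl(\eta(u)-M\bigr)\geq 0$, with equality achievable precisely when $\eta(u)=M$. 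Outside the continuation region $\widehat{V}=g$, and the smooth-fit identity $g'(A^*)=2M\Psi'(A^*)$ furnished by Corollary \ref{cor_H_function} yields $\widehat{V}\in C^{1}(0,1)$. Lemma \ref{lem_properties_of_H} then identifies $A^*$ as a minimizer of $H(\,\cdot\,;2M)=g-2M\Psi$, so that $\widehat{V}\leq g$ globally, giving \ref{var_ineq_1} and completing the variational picture together with the complementarity \ref{var_ineq_3}.

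For the lower bound $V\geq\widehat{V}$ in case (1), I would apply a generalized It\^o/Meyer--It\^o formula to $\widehat{V}\bigl(\Pi^{u}_{\pi}(t\wedge\tau\wedge\tau_{n})\bigr)$ along a localizing sequence $\tau_{n}$, combine with the verified \ref{var_ineq_2}, and take expectations to get
\begin{equation}
    \widehat{V}(\pi)\leq\mathbb{E}^{u}\!\left[\widehat{V}\bigl(\Pi^{u}_{\pi}(\tau\wedge\tau_{n})\bigr)+\int_{0}^{\tau\wedge\tau_{n}}\bigl(\varphi(u(s))+c\bigr)\,ds\right]
\end{equation}
for every admissible pair $(\Pi^{u}_{\pi},\tau)$. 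Passing $n\to\infty$ using the bound $\widehat{V}\leq g$, the boundedness of $g$, and Fatou's lemma (to accommodate also $\tau=\infty$) yields $\widehat{V}(\pi)\leq J(\pi,\Pi^{u}_{\pi},\tau)$, hence $\widehat{V}\leq V$.

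For the matching upper bound, the preliminary analysis has essentially already done the work: for the constant control $u(\cdot)\equiv u\in\mathcal{U}$ and exit time $\tau_{A^*}$, \eqref{simplified_expected_cost_2} together with Proposition \ref{prop_hitting_time_expect} and the symmetry of $g$ and $\Psi$ give
\begin{equation}
    J(\pi,\Pi^{u}_{\pi},\tau_{A^*})=g(A^*)+2\eta(u)\bigl(\Psi(\pi)-\Psi(A^*)\bigr)\quad\text{for }\pi\in(A^*,1-A^*),
\end{equation}
and $J=g(\pi)=\widehat{V}(\pi)$ outside. If $\eta$ attains its infimum at some $u^*$, then $u=u^*$ recovers exactly $\widehat{V}(\pi)$, proving optimality of $(u^*,\tau_{A^*})$. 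Otherwise, any minimizing sequence $u_{n}\in\mathcal{U}$ with $\eta(u_{n})\downarrow M$ produces costs converging to $\widehat{V}(\pi)$, establishing both $V\leq\widehat{V}$ and the claimed $\varepsilon$-optimality. Non-existence of an optimizer in the non-attainment case then follows by noting that the It\^o estimate above becomes strict on any admissible pair whose control spends positive expected occupation time inside the continuation region at values where $\eta(u(s))>M$; this is unavoidable when the infimum is not attained, while stopping immediately from $\pi\in(A^*,1-A^*)$ costs $g(\pi)>\widehat{V}(\pi)$.

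The residual cases are handled directly. If $M=0$ then $\varphi(u)+c\geq 0$ for every $u\in\mathcal{U}$, forcing $V\geq 0$, while sending $\delta\downarrow 0$ and then $n\to\infty$ carefully along a minimizing sequence in the cost formula above drives $J\to 0$. If $M<0$, any constant $u$ with $\eta(u)<0$ combined with a deterministic stopping time $\tau=T\to\infty$ sends the cost to $-\infty$ by the boundedness of $g$. The main obstacle I anticipate is the It\^o step: $\widehat{V}$ is only $C^{1}$ (not $C^{2}$) at $A^*$ and $1-A^*$, and under case (a) of Assumption \ref{ass_1} it fails to be $C^{2}$ also at $\pi=1/2$, so I would invoke a Meyer--It\^o / Tanaka formula and sign the local-time contributions using the convexity/concavity alternation supplied by Lemma \ref{lem_properties_of_H}. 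Careful localization is additionally needed to deal with possibly infinite stopping times and the integrability of $\int_{0}^{\tau}\varphi(u(s))\,ds$, since $\varphi$ is assumed merely measurable.
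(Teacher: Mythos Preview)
Your proposal is correct and follows essentially the same verification-plus-construction architecture as the paper: verify the variational inequalities for $\widehat V$, derive the lower bound $\widehat V\le V$ via It\^o, match it with the explicit cost formula \eqref{simplified_expected_cost_2} for constant controls, and dispose of $M\le 0$ directly. Two points where the paper's execution differs from yours are worth noting. First, rather than localizing and invoking Fatou, the paper observes that $\pi\mapsto \widehat V'(\pi)\pi(1-\pi)$ is bounded and combines this with the key inequality $u^2\le(\varphi(u)+c)/M$ (valid because $M>0$) to bound the quadratic variation of the stochastic integral by a constant times the expected running cost; restricting without loss to pairs with finite running cost then makes the stochastic integral a uniformly integrable martingale, so optional sampling applies directly even at $\tau=\infty$. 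Second, your anticipated Meyer--It\^o detour is unnecessary: since $\widehat V$ is $C^1$ across $A^*$ and $1-A^*$ (smooth fit) with one-sided second derivatives that are bounded near these points, $\widehat V'$ is locally Lipschitz and the generalized It\^o formula for functions with absolutely continuous derivative applies with \emph{no} local-time term. Relatedly, your concern about $\pi=1/2$ in case (a) of Assumption~\ref{ass_1} is misplaced: on $(A^*,1-A^*)$ one has $\widehat V=2M\Psi+\text{const.}$, which is $C^\infty$ at $1/2$, and $g$ only enters outside this interval. Finally, for $M=0$ with non-attained infimum your phrase ``sending $\delta\downarrow 0$ and then $n\to\infty$'' needs the diagonal coupling the paper makes explicit (choose $\delta_k$ with $\Psi(\pi)-\Psi(\delta_k)=k$ and $u_{n_k}$ with $\eta(u_{n_k})\le k^{-2}$), since $\Psi(\pi)-\Psi(\delta)\to\infty$ as $\delta\downarrow 0$.
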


\begin{remark}
    By an $\eps$-optimal strategy we mean a pair $(u(\cdot), \tau)$ such that the expected cost $J(\pi, \Pi^u_\pi(\cdot), \tau)$ satisfies $J(\pi, \Pi^u_\pi(\cdot), \tau) - V(\pi) < \eps$ for each $\pi \in [0, 1]$. The precise construction of the $\eps$-optimal controls is provided in the proof.
\end{remark}

\begin{remark}
    If $A^* = 0$ and $\pi \in (0, 1)$, then $\tau_{A^*} = \infty$ almost surely by applying Feller's test for explosions (cf. \cite[Theorem 5.5.29]{BMSC}) to $\Pi^{u}_\pi(\cdot)$ for any constant control $u(\cdot)\equiv u\in\mathcal{U}$. On the other hand, if $A^*=1/2$ then $\tau_{A^*}\equiv0$. 
\end{remark}

The first step in proving Theorem~\ref{thm_main} is to show that the variational inequalities \ref{var_ineq_1}--\ref{var_ineq_3} for the value function, found at the beginning of Section \ref{sec_prem_analysis}, are satisfied by $\widehat{V}(\cdot)$ of \eqref{value_function_std}. This is established in the proposition below.

\begin{Prop}\label{prop_var_inequalities}
     Suppose $M := \inf_{u \in \mathcal{U}} \eta(u) \ge 0$, and fix the notation of Theorem~\ref{thm_main}.
    \begin{enumerate}
        \item[(a)] If $M=0$, then the constant function $\widehat{V}(\cdot)\equiv0$ satisfies the variational inequalities \ref{var_ineq_1}--\ref{var_ineq_2}.
        \item[(b)] If $M > 0$, then the function $\widehat{V}(\cdot) = \widehat{V}(\cdot\,; 2M, A^*)$ defined in \eqref{value_function_std} belongs to the class $C^1\big((0, 1)\big) \cap C^2\big((0, 1) \setminus \{A^*, 1 - A^*\}\big)$ and satisfies the variational inequalities~\ref{var_ineq_1}--\ref{var_ineq_2}.        
    \end{enumerate}
    Moreover, if a bounded minimizing sequence for $\eta(\cdot)$ exists, then~\ref{var_ineq_3} also holds in both (a) and (b).
\end{Prop}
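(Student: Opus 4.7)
The strategy is direct verification of the three variational inequalities, splitting by region according to the definition of $\widehat V$, and using the auxiliary function $H(\pi) := g(\pi) - 2M\Psi(\pi)$ to connect the continuation-region formulas with Lemma~\ref{lem_properties_of_H} and Corollary~\ref{cor_H_function}. Part (a) is essentially immediate: with $\widehat V \equiv 0$ and $\widehat V'' \equiv 0$, inequality \ref{var_ineq_1} becomes $g \ge 0$, while \ref{var_ineq_2} reduces to $\inf_u(\varphi(u)+c) = \inf_u u^2\eta(u) \ge 0$, and both follow from the assumption on $g$ and the hypothesis $\eta \ge M = 0$.

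For part (b), I would begin with regularity. On the continuation region $(A^*, 1-A^*)$, $\widehat V$ is a smooth affine transformation of $\Psi(\cdot)$; on the stopping region, $\widehat V = g$, which is $C^2$ by Assumption~\ref{ass_1} except possibly at $1/2$ in the piecewise-linear case~(a), but there Corollary~\ref{cor_H_function} forces $A^* < 1/2$, so the kink sits safely inside the continuation region. The only non-trivial check is at the junctions $\pi = A^*$ and $\pi = 1 - A^*$: continuity is automatic, and the $C^1$-matching reduces to $\widehat V'(A^*{+}) = 2M\Psi'(A^*) = g'(A^*) = \widehat V'(A^*{-})$, which is precisely the defining equation of $A^*$.

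With regularity secured, \ref{var_ineq_1} is an equality on the stopping region and reduces to $H(\pi) \ge H(A^*)$ on the continuation region, which holds because Corollary~\ref{cor_H_function} identifies $A^*$ as a global minimizer of $H$. For \ref{var_ineq_2}, substituting $\widehat V''(\pi) = -2M/(\pi(1-\pi))^2$ on the continuation region collapses the Bellman bracket to $u^2(\eta(u) - M) \ge 0$; on the stopping region, where $\widehat V = g$, the bracket rearranges to
\[u^2\bigl(\eta(u) - M\bigr) + \tfrac{1}{2}u^2(\pi(1-\pi))^2 H''(\pi),\]
whose two summands are individually nonnegative.

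The main obstacle is confirming that $H''(\pi) \ge 0$ throughout $(0, A^*) \cup (1-A^*, 1)$. For this I would invoke Lemma~\ref{lem_properties_of_H} and Corollary~\ref{cor_H_function}: when $A^* < 1/2$, the corollary guarantees $A^* < \pi^*(2M)$, placing the stopping region inside the strict-convexity window $(0,\pi^*) \cup (1-\pi^*, 1)$ of $H$; when $A^* = 1/2$, the continuation region is empty and case~(b) of the lemma forces $H$ to be strictly convex on all of $(0,1)$. Finally, \ref{var_ineq_3} is immediate on the stopping region since $g - \widehat V = 0$; on the continuation region (and on all of $(0,1)$ in part (a)), a bounded minimizing sequence $u_n \in \mathcal{U}$ with $|u_n| \le C$ and $\eta(u_n) \to M$ yields $0 \le u_n^2(\eta(u_n) - M) \le C^2(\eta(u_n) - M) \to 0$, driving the relevant infimum to zero and completing the complementarity.
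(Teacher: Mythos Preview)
Your proposal is correct and follows essentially the same route as the paper: the same region-by-region split, the same use of $H(\cdot;2M)$ via Corollary~\ref{cor_H_function} for \ref{var_ineq_1}, the same reduction of \ref{var_ineq_2} to $u^2(\eta(u)-M)\ge 0$ on the continuation region and to $H''\ge 0$ on the stopping region, and the same bounded-minimizing-sequence argument for \ref{var_ineq_3}. Your explicit decomposition of the stopping-region bracket into two nonnegative summands and your separate handling of the edge case $A^*=1/2$ are in fact slightly more careful than the paper's own presentation, but otherwise the arguments coincide.
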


\begin{proof}
    We treat each case separately. 
    
    \underline{Case (a):} Since $g(0)=g(1)=0$, by concavity $g(\cdot)\geq0$, and so inequality \ref{var_ineq_1} is immediate. Since $M=0$ and $u^2>0$, $\varphi(u)+c\geq0$ for all $u\in\mathcal{U}$ by the definition of the infimum.  The non-negativity in \ref{var_ineq_2} is then clear.
    
    Suppose further that a bounded minimizing sequence $(u_n)_{n\geq1}$ taking values in $\mathcal{U}$ exists. Then, there exists $C>0$ such that $|u_n|\leq C$ for all $n$. For each $n\geq1$,
    \begin{equation}
    0\leq \varphi(u_n)+c
    = u_n^2\eta(u_n)
    \le C^2\eta(u_n)\to 0.
    \end{equation}
    We conclude $\inf_{u\in\mathcal{U}}\{\varphi(u)+c\} =0$ which, coupled with the fact that $\widehat{V}''(\cdot)\equiv 0$, directly gives \emph{equality} in \ref{var_ineq_2}--\ref{var_ineq_3} for all $\pi$.

    \underline{Case (b):}
    If $M > 0$, the regularity of $\widehat{V}(\cdot)$ is easily checked from the representation \eqref{value_function_std} and the choice of $A^*$. Thus, we only need to show that $\widehat{V}(\cdot)$ satisfies the variational inequalities.

    To verify that $\widehat{V}(\cdot)$ satisfies \ref{var_ineq_1}, consider first the function $H(\cdot) = H(\,\cdot\,; 2M)$ defined in \eqref{def_H_function}. By the definition of $A^*$ and Corollary~\ref{cor_H_function}, $H(\cdot)$ attains its minimum at the points $A^*$ and $1 - A^*$. As a result, for all $\pi \in (A^*, 1 - A^*)$, we have
    \begin{equation}
        g(\pi) - \widehat{V}(\pi) = g(\pi) - 2M\Psi(\pi) + 2M \Psi(A^*) - g(A^*) = H(\pi) - H(A^*) \ge 0.
    \end{equation}
    On the other hand, for $\pi \in [0, A^*] \cup [1 - A^*, 1]$, the definition of $\widehat{V}(\cdot)$ implies $\widehat{V}(\pi) = g(\pi)$, so inequality~\ref{var_ineq_1} holds trivially. In particular, the complementarity condition~\ref{var_ineq_3} is also satisfied on this set, since we have equality in \ref{var_ineq_1}.   

    To verify that $\widehat{V}(\cdot)$ satisfies the variational inequality~\ref{var_ineq_2}, we consider separately the cases $\pi \in (A^*, 1 - A^*)$ and $\pi \in [0, A^*] \cup [1 - A^*, 1]$. Since $\widehat{V}''(\cdot)$ may fail to exist at the boundaries $\{A^*,1-A^*\}$ and the
    endpoints $\{0,1\}$, we restrict our attention to points in the interior of these intervals.

    For $\pi \in (A^*, 1 - A^*)$, we compute the second derivative of $\widehat{V}(\cdot)$ using~\eqref{psi_derivatives}. Substituting this into inequality~\ref{var_ineq_2}, the condition becomes
    \begin{equation}\label{eq:var_ineq_reduction_cont_reg}
        -M u^2 + \varphi(u) + c \ge 0, \quad \forall \ u \in \mathcal{U},
    \end{equation}
    or, equivalently,
    \begin{equation}
        \frac{\varphi(u) + c}{u^2} \ge M, \quad \forall \ u \in \mathcal{U}.
    \end{equation}
    This, however, is a direct consequence of the definition $M = \inf_{u \in \mathcal{U}} \eta(u)$.

    As for $\pi \in (0, A^*) \cup (1 - A^*, 1)$, we must verify that
    \begin{equation}
    \frac{1}{2} g''(\pi)\, \pi^2 (1 - \pi)^2 u^2 + \varphi(u) + c \ge 0, \quad \forall\, u \in \mathcal{U},
    \end{equation}
    which is equivalent to
    \begin{equation}\label{nec_eq_for_g}
    g''(\pi) + \frac{2(\varphi(u) + c)}{\pi^2 (1 - \pi)^2 u^2} \ge 0, \quad \forall\, u \in \mathcal{U}.
    \end{equation}
    Using the definition $M = \inf_{u \in \mathcal{U}} \eta(u)$, we obtain the lower bound
    \begin{equation}\label{nec_eq_for_g_2}
    g''(\pi) + \frac{2(\varphi(u) + c)}{\pi^2 (1 - \pi)^2 u^2} \ge g''(\pi) + \frac{2M}{\pi^2 (1 - \pi)^2} = H''(\pi), \quad \forall\, u \in \mathcal{U}.
    \end{equation}
    Since $H(\cdot)$ is convex on $(0, \pi^*) \cup (1 - \pi^*, 1)$ and $A^* < \pi^*$ by Corollary~\ref{cor_H_function}, it follows that $H''(\pi) \ge 0$ for all $\pi \in (0, A^*) \cup (1 - A^*, 1)$. The desired inequality \eqref{nec_eq_for_g} then follows directly from \eqref{nec_eq_for_g_2}.

    To complete the proof, suppose once more that a bounded minimizing sequence $(u_n)_{n \geq 1}$ taking values in $\mathcal{U}$ exists. Since condition~\ref{var_ineq_3} has already been verified for $\pi \in [0, A^*] \cup [1 - A^*, 1]$, it remains to establish it for $\pi \in (A^*, 1 - A^*)$. To this end, we revisit \eqref{eq:var_ineq_reduction_cont_reg}, which corresponds to \ref{var_ineq_2} on this interval. Since the sequence $(u_n)$ is bounded, there exists a constant $C > 0$ such that $|u_n| \leq C$ for all $n \geq 1$. Using that $\eta(u_n) \ge M$, we obtain
    \begin{equation}
    0 \le -M u_n^2 + \varphi(u_n) + c = u_n^2 \left( \eta(u_n) - M \right) \le C^2 \left( \eta(u_n) - M \right) \to 0.
    \end{equation}
    It follows that $\inf_{u \in \mathcal{U}} \left\{ -M u^2 + \varphi(u) + c \right\} = 0$, and hence \emph{equality} holds in \ref{var_ineq_2} on $(A^*,1-A^*)$. Accordingly, condition~\ref{var_ineq_3} is satisfied on all of $(0,1)$.
\end{proof}

\begin{remark}
    If the minimum of $\eta(\cdot)$ over $\mathcal{U}$ is attained at some $u^*\in\mathcal{U}$, then a bounded minimizing sequence is trivially constructed by choosing the constant sequence $u_n\equiv u^*$, $n\geq0$. Moreover, if $\mathcal{U}$ is bounded the existence of such a sequence is also immediate regardless of attainment.
\end{remark}

We are now ready to prove Theorem \ref{thm_main}.

\subsection{Proof of Theorem \ref{thm_main}}

\begin{proof}

    We will prove each case in turn.

    \underline{Case (1), $M > 0$:} First, we show that the suggested function $\widehat{V}(\cdot) = \widehat{V}(\, \cdot \,, 2M, A^*)$ of \eqref{value_function_std} indeed provides a lower bound on the expected cost. Consider any admissible controlled process $\Pi^u_\pi(\cdot) \in \ca(\pi), \, \pi \in [0, 1]$ with control $u(\cdot)$, and let $\tau$ be any $\mathbb{F}^u$--measurable stopping time. Note that we can assume 
    \begin{equation}\label{basic_admissinility}
        \ex^u\left[\int_0^\tau \varphi(u(s)) \, ds + c\tau\right] = \ex^u\left[\int_0^\tau \big(\varphi(u(s)) + c \big) \, ds \right] < \infty,
    \end{equation}
    since otherwise the expected cost \eqref{def_expected_loss_pi} is automatically infinite, and such a stopping time cannot be optimal because the cost $g(\pi)$ associated with immediate stopping $\tau \equiv 0$ is finite.

    If $\pi = 0$ or $1$, the process $\Pi^u_\pi(\cdot) \equiv \pi$, so it is clear that $\widehat{V}(\pi) = g(\pi)$ provides a lower bound on the expected cost. Thus, we may assume $\pi \in (0, 1)$.
    In this case, consider the process $\widehat{V}(\Pi^u_\pi(\cdot))$. By Proposition~\ref{prop_var_inequalities}, the candidate value function $\widehat{V}(\cdot)$ is piece-wise $C^2$, with the possible exceptions arising at $A^*$ and $1-A^*$. However, it is readily checked that the left and right second derivatives exist and are bounded in a neighborhood of these points. As a consequence, $\widehat{V}(\cdot)$ has a locally Lipschitz (and hence locally absolutely continuous) derivative. Therefore, on the strength of Itô's rule in Problem 7.3, p. 219 of \cite{BMSC}, we obtain
    \begin{equation}\label{ito_rule}
        \begin{split}
            \widehat{V}(\Pi^u_\pi(\tau)) 
            =
            \widehat{V}(\pi) &+ \int_0^\tau \widehat{V}'(\Pi^u_\pi(s)) \, d\Pi^u_\pi(s) + \frac{1}{2}\int_0^\tau \widehat{V}''(\Pi^u_\pi(s)) \, d\langle\Pi^u_\pi\rangle(s)
            \\=
            \widehat{V}(\pi) &+ \frac{1}{2}\int_0^\tau \widehat{V}''(\Pi^u_\pi(s)) \big(u(s) \, \Pi^u_\pi(s) \, (1 - \Pi^u_\pi(s))\big)^2 \, ds
            \\&
            + \int_0^\tau \widehat{V}'(\Pi^u_\pi(s)) \, u(s) \, \Pi^u_\pi(s) \, (1-\Pi^u_\pi(s)) \, dB^u(s).
        \end{split}
    \end{equation}
    
    Note that the function $\pi \mapsto \widehat{V}'(\pi)\pi (1 - \pi)$ is bounded. On $(A^*, 1-A^*)$, it follows from the equality $\widehat{V}'(\cdot) = 2M \Psi'(\cdot)$ and \eqref{psi_derivatives}, while on the complement set it becomes a statement about the decay rate of the function $g(\cdot)$. Under Assumption \ref{ass_1}(a) this bound is obvious, while the case of Assumption \ref{ass_1}(b) follows from Remark 2.3 of \cite{campbell2024mean}. Thus, the stochastic integral 
    \begin{equation}\label{stoch_integral_in_ito}
        N_\pi^u(t) \coloneqq \int_0^{t \wedge \tau} \widehat{V}'(\Pi^u_\pi(s)) \, u(s) \, \Pi^u_\pi(s) \, (1-\Pi^u_\pi(s)) \, dB^u(s), \quad 0 \le t < \infty
    \end{equation}
    has quadratic variation
    \begin{equation}\label{quadratic_variation_bound}
        \begin{split}
            \langle N^u_\pi \rangle(t) 
            &=
            \int_0^{t \wedge \tau} \Big\{\widehat{V}'(\Pi^u_\pi(s)) \, u(s) \, \Pi^u_\pi(s) \, (1-\Pi^u_\pi(s))\Big\}^2 ds
            \\&\le
            \int_0^{t \wedge \tau} D^2 \, u^2(s) \, ds
            \le
            \int_0^{t \wedge \tau} \frac{D^2}{M} \, \big(\varphi(u(s)) + c\big) \, ds,
        \end{split}
    \end{equation}
    which is integrable at infinity, i.e., $\ex^u[\langle N_\pi^u \rangle(\infty)] < \infty$, due to \eqref{basic_admissinility}. Here, $D$ is the bound on $\pi \mapsto \widehat{V}'(\pi)\pi (1 - \pi)$, and the last inequality in \eqref{quadratic_variation_bound} follows from the definition of $M$. As a result, the process $N_\pi^u(\cdot)$ of \eqref{stoch_integral_in_ito} is a 
    uniformly integrable martingale with a last element
    (cf. \cite[Problems 1.3.20 and 1.5.24]{BMSC}). 
    
    By taking expectations on both sides of \eqref{ito_rule} and using the optional sampling theorem (cf. \cite[Theorem 1.3.22]{BMSC}),
    we obtain
    \begin{equation}\label{inequality_for_V_Ito}
        \begin{split}
            \widehat{V}(\pi) 
            &= 
            \ex^u\left[
                \widehat{V}(\Pi^u_\pi(\tau)) - 
                \frac{1}{2}\int_0^\tau \widehat{V}''(\Pi^u_\pi(s)) \big(u(s) \, \Pi^u_\pi(s) \, (1 - \Pi^u_\pi(s))\big)^2 \, ds
            \right]
            \\&\le
            \ex^u\Big[
                g(\Pi^u_\pi(\tau)) + \int_0^\tau \big(\varphi(u(s)) + c \big) \, ds 
            \Big] = J(\pi, \Pi^u_\pi(\cdot), \tau),
        \end{split}
    \end{equation}
    where the inequality follows from the fact that $\widehat{V}(\cdot)$ satisfies the variational inequalities \ref{var_ineq_1}--\ref{var_ineq_2}. Therefore, $\widehat{V}(\cdot)$ indeed provides a lower bound on the smallest expected cost, and it remains to discuss its attainability.

    As before, if $\pi = 0$ or $1$, there is nothing to prove, so we assume $\pi \in (0, 1)$. Suppose first that $M=\inf_{u\in\mathcal{U}}\eta(u)$ is attained at some $u^*\in\mathcal{U}$. If $\pi\not\in(A^*,1-A^*)$, then $\tau_{A^*}\equiv 0$ and for any constant control $u\in\mathcal{U}$ we get \begin{equation}V(\pi)\leq J\big(\pi,\Pi^{u}_\pi(\cdot),\tau_{A^*}) =  g(\pi)=\widehat{V}(\pi).\end{equation} Consider instead the case $\pi\in(A^*,1-A^*)$. Fix the constant control process $u^*(\cdot) \equiv u^*$ and choose the stopping time $\tau_{A^*}$ of \eqref{def_hitting_time}. By \eqref{simplified_expected_cost_2} and the definition of $\Psi(\cdot)$ in \eqref{def_psi_function} we have
    \begin{align*}
        V(\pi) &\leq J\big(\pi,\Pi^{u^*}_\pi(\cdot), \tau_{A^*}\big)=
        g(A^*) + 
        \frac{2(\varphi(u^*) + c)}{(u^*)^2} \left(
            \Psi(\pi) - \Psi(A^*)\right)\\
            &=g(A^*) + 
       2M\left(
            \Psi(\pi) - \Psi(A^*)\right)=\widehat{V}(\pi).
    \end{align*}
    Coupled with the lower bound, this gives $V(\cdot) \equiv \widehat{V}(\cdot)$. Moreover, this readily proves optimality of the selected pair $(u^*(\cdot), \tau_{A^*})$.

    Next, suppose that $M$ is not attained. The case $\pi\not\in(A^*,1-A^*)$ is unchanged. Therefore, take $\pi\in(A^*,1-A^*)$ and assume without loss of generality that $A^* < 1/2$. Let $(u_n)_{n\geq0}$ be a $\mathcal{U}$--valued minimizing sequence for $\eta(\cdot)$.  Similarly, let $(\delta_n)_{n\geq0}$ be a sequence in $(0,1/2)$ such that $\delta_n\downarrow A^*$. For all sufficiently large $n$, we must have $\delta_n<\pi<1-\delta_n$ by the hypothesis. Thus, for large enough $n$, by setting $u_n(\cdot)\equiv u_n$ and invoking \eqref{simplified_expected_cost_2} we obtain
    \begin{equation}
        V(\pi)\leq J\big(\pi,\Pi^{u_n}_\pi(\cdot), \tau_{\delta_n}\big)=
        g(\delta_n) + 
        \frac{2(\varphi(u_n) + c)}{u_n^2} \left(
            \Psi(\pi) - \Psi(\delta_n)\right).
    \end{equation}
    Letting $n\to\infty$ we find
    \begin{equation}V(\pi) \leq g(\delta_n) + 
        \frac{2(\varphi(u_n) + c)}{u_n^2} \left(
            \Psi(\pi) - \Psi(\delta_n)\right)\to g(A^*) + 
        2M \left(
            \Psi(\pi) - \Psi(A^*)\right) =\widehat{V}(\pi)\end{equation}
    as required. Moreover, for any $\eps>0$ we may take $n$ sufficiently large so that the pair $(u_n(\cdot),\tau_{\delta_n})$ defines a $\varepsilon$-optimal strategy. 

    Finally, we show that if $M$ is not attained and $\pi\in(A^*,1-A^*)$ then no optimal control exists. To this end, fix any pair $(u(\cdot), \tau)$. Since $V(\cdot)<g(\cdot)$ in this region, we may without loss of generality consider only stopping times for which $\{\tau>0\}$ has positive measure. Moreover, we may further restrict to control pairs $(u(\cdot),\tau)$ satisfying \eqref{basic_admissinility}. As we have already identified $\widehat{V}(\cdot)\equiv V(\cdot)$, by the same arguments as above we may replace $\widehat{V}(\cdot)$ with $V(\cdot)$ in \eqref{inequality_for_V_Ito} to get
    \begin{equation}
        \begin{split}
            V(\pi) 
            &= 
            \ex^u\left[
                V(\Pi^u_\pi(\tau)) - 
                \frac{1}{2}\int_0^\tau V''(\Pi^u_\pi(s)) \big(u(s) \, \Pi^u_\pi(s) \, (1 - \Pi^u_\pi(s))\big)^2 \, ds
            \right]\\
            &< \ex^u\left[
                g(\Pi^u_\pi(\tau)) + \int_0^\tau (\varphi(u(s)) +c)\, ds
            \right] = J(\pi, \Pi^u_\pi(\cdot), \tau).
        \end{split}
    \end{equation}
    Here we use that the variational inequality \ref{var_ineq_2} is strict in the continuation region at any $u(s)\in\mathcal{U}$ in conjunction with the requirement $\mathbb{P}^u(\tau>0)>0$. To see the former, we may substitute $V(\cdot)\equiv\widehat{V}(\cdot)$ into \ref{var_ineq_2} to obtain \eqref{var_ineq_2_simplified} with $\alpha=2M$ in the continuation region. If equality holds in \eqref{var_ineq_2_simplified} at some $u(s)\in\mathcal{U}$ then $\eta(\cdot)$ attains $M$ at $u(s)$ which contradicts our assumptions. Since $\pi\in(A^*,1-A^*)$ and $\tau>0$, $\Pi_\pi^u(\cdot)$ necessarily spends strictly positive time in the continuation region. Combining these facts ensures that 
    \[-\frac{1}{2}V''(\Pi^u_\pi(s)) \big(u(s) \, \Pi^u_\pi(s) \, (1 - \Pi^u_\pi(s))\big)^2<(\varphi(u(s)) +c)\]
    on a set of $dt\otimes d\mathbb{P}^u$ positive measure, which is precisely what yields the desired strict inequality in the argument above. Since $(u(\cdot),\tau)$ were chosen arbitrarily, the proof is complete.

    \underline{Case (2), $M = 0$:} Fix $\pi \in [0, 1]$. By the definition of the infimum $\eta(u)\geq0$ for all $u\in\mathcal{U}$. In particular, this implies that $\varphi(u)+c\geq0$ for all such $u$. Since $g(\cdot)\geq0$, we readily conclude that $V(\cdot)\geq0$. Moreover, if $\pi = 0$ or $1$, immediate stopping guarantees the expected cost $g(0) = g(1) = 0$, which finishes the proof for this case. We thus assume from now onwards that $\pi \in (0, 1)$.

    If $M$ is attained at some $u^*\in\mathcal{U}$, we may set $u^*(\cdot)\equiv u^*$. Then, for all stopping times $\tau$,
    \begin{equation}
    V(\pi) \leq \mathbb{E}^u \left[
        g\big(\Pi^{u^*}_\pi(\tau)\big)
        + \int_0^\tau (\varphi(u(t)) + c) \, dt
    \right]=\mathbb{E}^u \left[
        g\big(\Pi^{u^*}_\pi(\tau)\big)\right].\end{equation}
    If we choose $\tau_{A^*}$ with $A^*=0$, by Feller's test for explosions (cf. \cite[Theorem 5.5.29]{BMSC}) $\tau_{A^*}\equiv\infty$. At the same time, recalling from Lemma \ref{lem:pi.consistent} that $\lim_{t\to\infty}\Pi_\pi^u(t)\in\{0,1\}$ a.s.,\ we conclude $g(\Pi_\pi^{u^*}(\tau_{A^*}))= \lim_{t\to\infty}g(\Pi_\pi^{u^*}(t))=0$ a.s. It follows that $V(\cdot) \equiv 0$.

    If $M$ is not attained, then we may carefully modify the argument employed in Case (1). We again define $(u_n)_{n\geq0}$ to be a $\mathcal{U}$--valued minimizing sequence for $\eta(u)$ and set $u_n(\cdot)\equiv u_n$. For any fixed $\pi\in(0,1)$ and any $\delta>0$ satisfying $\delta<\pi<1-\delta$ we obtain from \eqref{simplified_expected_cost_2} that
    \begin{equation}
        V(\pi)\leq J\big(\pi,\Pi^{u_n}_\pi(\cdot), \tau_{\delta}\big)=
        g(\delta) + 
        \frac{2(\varphi(u_n) + c)}{u_n^2} \left(
            \Psi(\pi) - \Psi(\delta)\right).
    \end{equation}
    Observe that for any fixed $\pi$, 
    $\Psi(\pi)-\Psi(\delta)\to\infty$
    as $\delta\downarrow 0$. Since $\Psi(\cdot)\leq0$ is continuous and strictly increasing on $(0,1/2)$ there exists a unique solution $\delta_k\in(0,1/2)$ to the equation
    \begin{equation}\label{eqn:psi.delta}
        \Psi(\pi) - \Psi(\delta_k) = k 
    \end{equation}
    for all sufficiently large $k\in\mathbb{N}$. Let $k^*$ be the smallest $k$ such that a solution to \eqref{eqn:psi.delta} exists and consider the corresponding sequence $(\delta_k)_{k\geq0}$ with the convention that $\delta_k=1/2$ if $k\leq k^*$. By construction, $\delta_{k}\to 0$. Next, observe that since $\lim_{n\to\infty}\eta(u_n)=0$ we may choose a subsequence $(u_{n_k})_{k\geq0}$ such that $\eta(u_{n_k})\leq k^{-2}$. Fixing $\pi$ and making the choice $u_{n_k}(\cdot) \equiv u_{n_k}$ and $\tau_{\delta_k}$, we obtain, for all sufficiently large $k$,
    \begin{equation}
        V(\pi)\leq J\big(\pi,\Pi^{u_{n_k}}_\pi(\cdot), \tau_{\delta_k}\big)=
        g(\delta_k) + 
        2k\eta(u_{n_k})\leq g(\delta_k) + 2k^{-1} \to 0
    \end{equation}
    as $k\to\infty$ since $g(0)=0$. We note once more that this choice of sequence can be used to construct $\varepsilon$-optimal strategies.

    In fact, we can again show that when $M$ is not attained and $\pi\in(0,1)$ there is no optimal control. Indeed, since $V(\cdot)<g(\cdot)$, we conclude that $\tau\equiv 0$ cannot be optimal. Thus, it suffices to restrict to stopping times for which the set $\{\tau>0\}$ has positive measure. At the same time, by the definition of non-attainment, $\varphi(u)+c>0$ for all $u\in\mathcal{U}$. Then, for any admissible $u(\cdot)$
    \begin{equation}J\big(\pi,\Pi^{u}_\pi(\cdot), \tau\big)=\mathbb{E}^u\left[g(\Pi^u_\pi(\tau))+\int_0^\tau (\varphi(u(t)) + c) dt\right]\geq \mathbb{E}^u\left[\int_0^\tau (\varphi(u(t)) + c) dt\right]>0=V(\pi).\end{equation}
    This completes the proof.

    \underline{Case (3), $M < 0$:} We trivially have $V(\cdot)\geq -\infty$. Since $\inf_{u\in\mathcal{U}}\eta(u)<0$, regardless of attainment, there necessarily exists $\delta>0$ and $\mathfrak{u}\in\mathcal{U}$ such that $\varphi(\mathfrak{u})+c\leq -\delta$. Fix any such $\mathfrak{u}$ and define the control $u^*(\cdot) \equiv \mathfrak{u}$. In addition, set $\tau^* \equiv \infty$ (e.g., $\tau^* = \tau_0$ of \eqref{def_hitting_time} for $\pi \in (0, 1)$, which is a.e. infinite by Feller's test for explosions \cite[Theorem 5.5.29]{BMSC}). Since $g(\cdot)\geq0$ is bounded, we have
    \begin{equation}
        J\big(\pi,\Pi^{u^*}_\pi(\cdot), \tau^*\big) = \mathbb{E}^u\left[g(\Pi_{\pi}^{u^*}(\tau^*))+(\varphi(\mathfrak{u}) + c) \, \tau^*\right]\leq \mathbb{E}^u\left[g(\Pi_{\pi}^{u^*}(\tau^*))-\delta\tau^*\right] =-\infty.
    \end{equation}
    Thus, $V(\cdot)\equiv -\infty$ and the value $-\infty$ is always attainable by a suitable constant control and $\tau^*$.
\end{proof}

\begin{remark}\label{rem_zero_control}
    A close inspection of the proof of Theorem~\ref{thm_main} shows that the exclusion $0\notin\mathcal U$ is not essential if we define $M:=\inf_{u\in\mathcal{U}\setminus\{0\}}\eta(u)$; it merely avoids additional casework and technicalities. Intuitively, when $\inf_{u\in\mathcal{U}}\{\varphi(u)+c\}>0$ the choice $u(\cdot)\equiv 0$ on any set of positive $(dt\otimes\mathbb P^u)$-measure is suboptimal: the posterior does not update while positive running cost accrues. Making this precise requires progressively measurable modifications of controls and attention to positive-measure sets with empty interior (e.g., fat Cantor sets). On the other hand, pathologies arise if $\varphi(u)+c\le 0$ for some $u$. For example, if $\varphi(u)+c>0$ for all $u\neq 0$ but $\varphi(0)+c<0$, then $M\geq 0$ yet the choice $u(\cdot)\equiv 0$ together with $\tau\equiv\infty$ is optimal and attains $V\equiv -\infty$. These peculiarities can be ruled out on a case-by-case basis, but for clarity of exposition we refrain from pursuing these technical extensions here.
\end{remark}

\begin{remark}
    Suppose that the infimum of $\eta(\cdot)$ over $\mathcal{U}$ is positive and attained by $u^*\in\mathcal{U}$ for $u^*>0$ (an analogous result holds for $u^*<0$), so that the optimal stopping time is given by $\tau_{A^*} = \inf\left\{t \ge 0 : \Pi^{u^*}_\pi(t) \notin (A^*, 1 - A^*)\right\}$. Let $\mathbb{P}^1$ (resp. $\mathbb{P}^0$) be the (conditional) measure on the original probability space $(\Omega,\mathcal{F},\mathbb{F})$, i.e., of the strong formulation, associated with the case $\theta=1$ (resp. $\theta=0$). If we choose the optimal control $u^*(\cdot)\equiv u^*$ from Theorem \ref{thm_main}, the \textit{likelihood ratio} process $L^{u^*}(\cdot)=(L^{u^*}(t))_{t\geq0}$ for the test satisfies
    \begin{equation*}
        L^{u^*}(t)
        \coloneqq
        \frac{d\mathbb{P}^{1}}{d\mathbb{P}^{0}}
        \bigg|_{\mathcal{F}^{X^{u^*}}(t)}
        =
        \exp\Bigl(u^*\,X^{u^*}(t) - \frac{1}{2}(u^*)^2\,t\Bigr), \quad 0 \le t < \infty,
    \end{equation*}
    and $\Pi^{u^*}(\cdot)$ 
    can be represented as
    $\Pi^{u^*}(t)
    =p\,L^{u^*}(t)/\left(p\,L^{u^*}(t) + (1-p)\right)$  (cf. \cite[Equations (2.6)--(2.8)]{GapShi11}).
    As a result,  we can see that the optimal stopping time $\tau_{A^*}$ is the first time that $X^{u^*}(\cdot)$ hits one of two linear boundaries. In particular, if we define
    \begin{equation}
        \Gamma^*(p)
        \coloneqq
        \ln\left(\frac{(1-A^*)(1-p)}{A^* p}\right) \quad \text{and} \quad \Gamma_*(p)
        \coloneqq
        \ln\left(\frac{A^*(1-p)}{(1-A^*)p}\right)
    \end{equation}
    then by rearranging the representation for $\Pi^{u^*}(\cdot)$ we obtain
    \begin{equation}\tau_{A^*} = \inf\left\{t\ge 0 : X^{u^*}(t) \notin \left(\frac{u^*t}{2}+\frac{\Gamma_*(p)}{u^*}, \frac{u^*t}{2}+\frac{\Gamma^*(p)}{u^*}\right) \right\}. \end{equation}
\end{remark}

\section{Discussion and Examples}\label{sec_examples}

In this section we complement our main results with further analysis and examples. 
We begin by describing the statistical properties of the optimal rule $(u^*,\tau^*,d^*)$, 
focusing on the associated distributions and error probabilities. 
Next, we compare our framework with the $L^2$ loss case of Ekström and Karatzas \cite{EksKar}, 
showing that our results agree whenever the two formulations coincide. 
Finally, we study the quadratic running cost, which permits a parametric analysis of 
the optimal boundaries and value function, supported by numerical illustrations.

\subsection{Statistical properties}

We begin our discussion by addressing the statistical properties of the optimal Bayes rule $(u^*, \tau^*,d^*)$ given in Theorem \ref{thm_main} under the strong formulation. We assume that $\eta(\cdot)$ attains its minimum on $\mathcal{U}$ so that a constant optimal control $u^*\in\mathcal{U}$ exists. We also tacitly assume that the problem has a non-trivial solution with $(A^*,1-A^*)\not=\emptyset$. While this is not necessary, the statements are vacuously true otherwise. Since the optimal control is constant, we restrict our attention to the (conditional) distributions of $\tau^*$ and $d^*$. As in the discussion around \eqref{objective_in_terms_of_pi}, where we wrote $g(\pi) = f(\pi,h(\pi))$, we consider the decision rule $d^*$ in terms of the measurable selector $h(\cdot)$.  We organize all of our results in the next proposition whose proof can be found in Appendix \ref{subsec_proof_of_stat_prop}.

\begin{Prop}\label{prop:stat.prop}
    Fix $A^*< \pi < 1-A^*$. The conditional distribution of the optimal decision rule $d^*\in\{h(A^*),h(1-A^*)\}$ is characterized by the probabilities
    \begin{equation}
        \mathbb{P}\big(d^* = h(1-A^*)\mid\theta=1\big) = \frac{(1-A^*)(\pi-A^*)}{(1-2A^*)\pi}
    \end{equation}
    and
    \begin{equation}
        \mathbb{P}\big(d^* = h(1-A^*)\mid\theta=0\big) =\frac{A^*(\pi-A^*)}{(1-2A^*)(1-\pi)}.
    \end{equation}
    The conditional means of the optimal stopping time $\tau^*$ are given by
    \begin{equation}
        \begin{split}
            \ex\big[\tau^*\mid\theta=1\big]
            &=
            \frac{2}{(u^*)^2}\left[2\ln\left(\frac{1-A^*}{A^*}\right)\frac{(1-A^*)(\pi-A^*)}{(1-2A^*)\pi} - \ln\left(\frac{(1-A^*)\pi}{A^*(1-\pi)}\right)\right], \\
            \ex\big[\tau^*\mid\theta=0\big]
            &=
            \frac{2}{(u^*)^2}\left[\ln\left(\frac{(1-A^*)\pi}{A^*(1-\pi)}\right)-2\ln\left(\frac{1-A^*}{A^*}\right)\frac{A^*(\pi-A^*)}{(1-2A^*)(1-\pi)}\right],
        \end{split}
    \end{equation}
    and the Laplace transforms of the conditional laws are
    \begin{equation}
        \begin{split}
            \ex\big[e^{-\alpha\tau^*}\mid\theta=1\big] 
            &=
            \frac{\left[\left(\frac{1-A^*}{A^*}\right)^{\beta-\frac{1}{2}}-\left(\frac{1-A^*}{A^*}\right)^{-\beta+\frac{1}{2}}\right]\left(\frac{\pi}{1-\pi}\right)^{-\beta-\frac{1}{2}}+\left[\left(\frac{1-A^*}{A^*}\right)^{\beta+\frac{1}{2}}-\left(\frac{1-A^*}{A^*}\right)^{-\beta-\frac{1}{2}}\right]\left(\frac{\pi}{1-\pi}\right)^{\beta-\frac{1}{2}}}{\left(\frac{1-A^*}{A^*}\right)^{2\beta}-\left(\frac{1-A^*}{A^*}\right)^{-2\beta}},
            \\
            \ex\big[e^{-\alpha\tau^*}\mid\theta=0\big] 
            &=
            \frac{\left[\left(\frac{1-A}{A}\right)^{\beta+\frac{1}{2}}-\left(\frac{1-A}{A}\right)^{-\beta-\frac{1}{2}}\right]\left(\frac{\pi}{1-\pi}\right)^{-\beta+\frac{1}{2}}+\left[\left(\frac{1-A}{A}\right)^{\beta-\frac{1}{2}}-\left(\frac{1-A}{A}\right)^{-\beta+\frac{1}{2}}\right]\left(\frac{\pi}{1-\pi}\right)^{\beta+\frac{1}{2}}}{\left(\frac{1-A^*}{A^*}\right)^{2\beta}-\left(\frac{1-A^*}{A^*}\right)^{-2\beta}},
        \end{split}
    \end{equation}
    where $\beta := \sqrt{\frac{2\alpha}{(u^*)^2}+\frac{1}{4}}$. As a consequence, the conditional densities admit the series representations\footnote{Note that the constant $\boldsymbol{\pi}=3.14159...$ should not be confused with the parameter $\pi$ in this expression.}
    \begin{align*}
        \pr\big(\tau^*\in dt \mid \theta=1\big)
        &=
        e^{-\frac{(u^*)^2 t}{8}}\Bigg[\sqrt{\frac{A (1-\pi)}{(1-A)\pi}}\sum_{k=-\infty}^\infty \frac{\ln\left(\frac{\pi}{1-\pi}\left(\frac{1-A^*}{A^*}\right)^{4k+1}\right)}{\sqrt{2\boldsymbol{\pi}} |u^*| t^{3/2}}e^{-\frac{\ln\left(\frac{\pi}{1-\pi}\left(\frac{1-A^*}{A^*}\right)^{4k+1}\right)^2}{2(u^*)^2t}}\\
        & \quad + \sqrt{\frac{(1-A)(1-\pi)}{A\pi}}\sum_{k=-\infty}^\infty \frac{\ln\left(\frac{1-\pi}{\pi}\left(\frac{1-A^*}{A^*}\right)^{4k+1}\right)}{\sqrt{2\boldsymbol{\pi}} |u^*| t^{3/2}}e^{-\frac{\ln\left(\frac{1-\pi}{\pi}\left(\frac{1-A^*}{A^*}\right)^{4k+1}\right)^2}{2(u^*)^2t}}\Bigg]dt,
    \end{align*}
    and
    \begin{align*}
        \pr\big(\tau^*\in dt \mid \theta=0\big)
        &=
        e^{-\frac{(u^*)^2t}{8}}\Bigg[\sqrt{\frac{(1-A)\pi}{A (1-\pi)}}\sum_{k=-\infty}^\infty \frac{\ln\left(\frac{\pi}{1-\pi}\left(\frac{1-A^*}{A^*}\right)^{4k+1}\right)}{\sqrt{2\boldsymbol{\pi}} |u^*| t^{3/2}}e^{-\frac{\ln\left(\frac{\pi}{1-\pi}\left(\frac{1-A^*}{A^*}\right)^{4k+1}\right)^2}{2(u^*)^2t}}\\
        & \quad +\sqrt{\frac{A\pi}{(1-A)(1-\pi)}}\sum_{k=-\infty}^\infty \frac{\ln\left(\frac{1-\pi}{\pi}\left(\frac{1-A^*}{A^*}\right)^{4k+1}\right)}{\sqrt{2\boldsymbol{\pi}} |u^*| t^{3/2}}e^{-\frac{\ln\left(\frac{1-\pi}{\pi}\left(\frac{1-A^*}{A^*}\right)^{4k+1}\right)^2}{2(u^*)^2t}}\Bigg]dt.
    \end{align*}
\end{Prop}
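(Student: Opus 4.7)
The plan is to reduce each of the four claims to a functional of the first exit time of a drifted Brownian motion from a symmetric interval, then dispatch them with standard tools: optional sampling on affine and exponential martingales for the probabilities, means, and Laplace transforms, and the method of images for the densities.

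The reduction rests on the likelihood-ratio representation from the preceding remark (with $p$ replaced by the current initial belief $\pi$), which makes the logit
\begin{equation*}
  Y(t) := \ln\frac{\Pi^{u^*}(t)}{1-\Pi^{u^*}(t)} \;=\; \ln\frac{\pi}{1-\pi} + u^*\,X^{u^*}(t) - \tfrac12(u^*)^2\, t
\end{equation*}
a Brownian motion with variance $(u^*)^2$ under $\mathbb P^\theta := \mathbb P(\cdot\mid\theta)$, whose drift is $\mu_\theta=+\tfrac12(u^*)^2$ when $\theta=1$ and $\mu_\theta=-\tfrac12(u^*)^2$ when $\theta=0$ (since $X^{u^*}$ is a Brownian motion conditionally on $\theta=0$ and has drift $u^*$ conditionally on $\theta=1$). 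Setting $a:=\ln(\pi/(1-\pi))$ and $b:=\ln((1-A^*)/A^*)$, the event $\{d^*=h(1-A^*)\}$ coincides with $\{Y(\tau^*)=b\}$, and $\tau^*$ is the first exit time of $Y$ from $(-b,b)$ starting at $a\in(-b,b)$.

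With this setup the first three claims are immediate. The exponential martingale $\exp(-2\mu_\theta Y(t)/(u^*)^2)=\exp(\mp Y(t))$, evaluated via optional sampling at $\tau^*$, produces a two-point identity determining $\mathbb P^\theta(Y(\tau^*)=b)$; substituting $e^{\pm a}=(\pi/(1-\pi))^{\pm 1}$ and $e^{\pm b}=((1-A^*)/A^*)^{\pm 1}$ recovers the stated probabilities. Optional sampling of the martingale $Y(t)-\mu_\theta t$ gives $\ex^\theta[\tau^*]=\mu_\theta^{-1}\bigl(b(2\mathbb P^\theta(Y(\tau^*)=b)-1)-a\bigr)$, which turns into the announced means after using $b+a=\ln(((1-A^*)\pi)/(A^*(1-\pi)))$ together with the probabilities just computed. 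Uniform integrability is automatic since $|Y(\cdot\wedge\tau^*)|\le b$ and $\tau^*<\infty$ almost surely on $(A^*,1-A^*)$. For the Laplace transforms I would use the pair of exponential martingales $\exp(\lambda Y(t)-(\lambda\mu_\theta+\tfrac12\lambda^2(u^*)^2)t)$ with $\lambda$ chosen so that the coefficient of $t$ equals $-\alpha$. The two admissible roots are $\lambda=\mp\tfrac12\pm\beta$ with $\beta=\sqrt{2\alpha/(u^*)^2+1/4}$, and optional sampling at $\tau^*$ yields a $2\times 2$ linear system for $\ex^\theta[e^{-\alpha\tau^*}\mathbf{1}_{\{Y(\tau^*)=\pm b\}}]$. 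Solving by Cramer's rule, summing the two unknowns, and re-expressing $e^{\pm a}$ and $e^{\pm b}$ in terms of $\pi$ and $A^*$ reproduces the hyperbolic-sine ratios displayed in the proposition.

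The densities are the only genuinely technical piece, and I would obtain them by the method of images combined with Girsanov. First strip the drift: under $\mathbb P^\theta$ the joint density of $(Y(\tau^*),\tau^*)$ equals $\exp\bigl(\mu_\theta(y-a)/(u^*)^2-\tfrac12\mu_\theta^2t/(u^*)^2\bigr)$ times the corresponding density under the driftless law of variance $(u^*)^2$ starting at $a$. The $t$-part of this prefactor is exactly $e^{-(u^*)^2 t/8}$, while the spatial part at the two boundaries $y=\pm b$ evaluates to $\sqrt{(1-A^*)(1-\pi)/(A^*\pi)}$ and $\sqrt{A^*(1-\pi)/((1-A^*)\pi)}$ under $\mathbb P^1$ (and their reciprocals under $\mathbb P^0$), which matches the two $\sqrt{\cdot}$ coefficients in each displayed series. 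The driftless joint density of the first exit through $\pm b$ admits the classical image-series representation
\begin{equation*}
  \sum_{k\in\mathbb Z}\frac{\ell_k^{\pm}}{\sqrt{2\boldsymbol\pi}\,|u^*|\,t^{3/2}}\exp\!\Big(-\frac{(\ell_k^{\pm})^2}{2(u^*)^2 t}\Big),
\end{equation*}
where $\ell_k^{\pm}$ are the signed distances from $a$ to the successive reflections of the target barrier across $\pm b$ (cf.\ \cite[Chapter 2]{BMSC}). Identifying $\ell_k^{\pm}=\pm a+(4k+1)b$ and re-expressing $a,b$ in terms of $\pi,A^*$ recovers the stated series. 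The main obstacle throughout is the bookkeeping of signs, image positions, and Girsanov prefactors across the two conditional measures, but once the image template is written down, matching to the displayed formulas is mechanical.
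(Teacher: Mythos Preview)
Your proposal is correct and shares the same core reduction as the paper: pass to the log-odds process $Y(t)=\mathsf{logit}(\Pi^{u^*}(t))$, observe that under $\mathbb P(\cdot\mid\theta)$ it is a Brownian motion of variance $(u^*)^2$ with drift $\pm\tfrac12(u^*)^2$, and then read off exit quantities from the symmetric interval $(-b,b)$ with $b=\ln((1-A^*)/A^*)$. The paper reaches the same conditional dynamics by applying It\^o's rule to $\mathsf{logit}(\Pi^{u^*})$ and expanding the innovation $B^{u^*}$, rather than via the likelihood-ratio formula you invoke, but this is immaterial.

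Where the two diverge is in the dispatch of the exit functionals. The paper does not use optional sampling or the method of images at all: it computes the exit probability from the scale function $p(x)=1-e^{-x}$ via \cite[(5.5.61)]{BMSC}, the conditional mean from the scale/speed integral formula \cite[(5.5.59)]{BMSC}, and then simply cites Borodin--Salminen \cite[p.~315]{BS12} for both the Laplace transform and its inverse (the series $ss_t$). Your route---exponential martingales for the probabilities and Laplace transforms, the linear martingale $Y(t)-\mu_\theta t$ for the mean, and Girsanov plus images for the density---is equivalent and more self-contained, at the price of the bookkeeping you flag. Either approach works; the paper's is terser because it outsources the Brownian formulas to handbooks.
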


\begin{remark}
Recall that in the classic sequential testing framework (i.e., $\mathfrak{L}(\theta,d) = \mathbf{1}_{d\not=\theta}$) we have that $h(1-A^*) = 1$ and $h(A^*) = 0$. In other words, if the posterior probability process hits $A^*$, the optimal decision $d^* = 0$, and vice versa if the posterior probability process hits $1- A^*$, the optimal decision $d^* = 1$. Therefore, in that case, Proposition \ref{prop:stat.prop} says that for the null hypothesis $H_0: \theta = 0$,
\begin{equation}\pr(\mathrm{Type \ I \ Error})= \pr(\mathrm{Reject} \ H_0 \mid \theta=0) = \frac{A^*(\pi-A^*)}{(1-2A^*)(1-\pi)}\end{equation} and  \begin{equation}\pr(\mathrm{Type \ II \ Error})= \pr(\mathrm{Fail \ to \ Reject} \ H_0 \mid \theta=1) = 1-\frac{(1-A^*)(\pi-A^*)}{(1-2A^*)\pi}.\end{equation}
In particular, the power of the test is $(1-A^*)(\pi-A^*)/\left[(1-2A^*)\pi\right]$.
\end{remark}

\begin{remark}
    Proposition \ref{prop:stat.prop} gives the tools necessary to study the consequences of mispecifying the prior $\pi$ (note $A^*$ and $u^*$ are independent of $\pi$) and the robustness of the solution to the problem inputs $c>0$, $\varphi(\cdot)$ and $\mathfrak{L}(\cdot,\cdot)$ (which determine $d^*$, $A^*$ and $u^*$). To this end, we emphasize that the unit drift and diffusion in the dynamics of $X^u(\cdot)$ of \eqref{controlled_sde} are chosen without loss of generality, so the sensitivity to such assumptions can still be assessed using our results. 
    Indeed, in view of Remark \ref{rem:gen.drift.vol.filtering.eqn}, for drift $\mu\neq 0$ and diffusion coefficient $\sigma>0$, we define the rescaled control set 
    $\tilde{\mathcal U}\coloneqq (\mu/\sigma)\,\mathcal U$, and the transformed running cost $\tilde\varphi(\tilde u)\;\coloneqq\;\varphi\!\left(\sigma \tilde u / 
    \mu\right)$.
    Then, our problem with data $(\mu,\sigma,\varphi(\cdot),\mathcal U)$ is equivalent to the
    unit-coefficients problem $(1, 1, \tilde\varphi(\cdot), \tilde{\mathcal U})$:
    by rescaling the auxiliary control $\tilde u(\cdot) \coloneqq (\mu/\sigma)\,u(\cdot)$,
    the state equations for the posterior (as in \eqref{cond_prob_dynamics} and \eqref{cond_prob_dynamics_scaled}) are identical in $\tilde u$, and the objective values similarly agree.
    Consequently, the value function, optimal stopping boundary $A^*$, and optimal control profile are
    unchanged after the substitution $u^*(\cdot) = (\sigma/\mu)\,\tilde u^*(\cdot)$. Sensitivity with respect to the signal--to--noise ratio $\mu/\sigma$ is therefore captured entirely by the corresponding reparameterization of $\varphi(\cdot)$ and of $\mathcal U$.
\end{remark}

\subsection{The $L^2$ penalty with constrained control}

We take the opportunity to compare our results with those of Ekström and Karatzas \cite{EksKar}. In \cite{EksKar} the authors treat a similar estimation problem for a \textit{general} random variable $\theta$ in the specific case of an $L^2$ classification loss $\mathfrak{L}(\theta,d) = (\theta - d)^2$. Strictly speaking, they consider the penalty (in our notation) $g(\pi)  = (\theta - \pi)^2$ directly, but it appears that these formulations are equivalent since the optimal decision rule for $L^2$ losses is always the posterior expectation (cf. \cite[Theorem 4.1.15]{durrett2019probability}). It turns out that the $L^2$ penalty lends enough structure to the problem to permit a complete analysis for general prior distributions. On the other hand, we are able to retain full tractability for very general losses, but at the cost of restricting to binary classification problems. 

In \cite{EksKar} they study controls $u(\cdot)$ constrained to lie in $(0,1]$ and find that the full bang control $u^*(\cdot)\equiv 1$ is optimal under a growth assumption on the full running cost $\zeta(u) := \varphi(u) + c$. We show here that in the settings where our work and \cite{EksKar} overlap, i.e., for $\mathcal{U}=(0,1]$ and $\theta\sim\mathrm{Bernoulli}(\pi)$, our results agree. 

The running cost of \cite{EksKar} is chosen to satisfy:
\begin{enumerate}
    \item[(i)] $\zeta:(0,1]\to (0,\infty)$ is continuous and non-decreasing, and
    \item[(ii)] there exists a $u_0\in(0,1]$ such that $\frac{\zeta(u)}{u^2}\geq \frac{\zeta(u_0)}{u_0^2}$ for all $u\in(0,1]$.
\end{enumerate}
When (ii) holds with $u_0=1$ the authors call it a \textit{superquadratic cost of control} since it holds that $\zeta(u)\geq \zeta(1) u^2$ for all $u\in(0,1]$. In our notation, we see that (ii) implies that 
\begin{equation}\eta(u) = \frac{\zeta(u)}{u^2} \geq \frac{\zeta(u_0)}{u^2_0}=\eta(u_0).\end{equation}
Namely, the minimum of $\eta(u)$ on $\mathcal{U}=(0,1]$ is attained at $u_0$. In agreement with our own Theorem \ref{thm_main},  Theorem 5.2 of \cite{EksKar} concludes that $u^*(\cdot) \equiv u_0$ is optimal. In particular, when $u_0=1$ we also recover the optimality of the \textit{full bang control} $u^*(\cdot)\equiv 1$. The optimality of the first hitting time of the posterior expectation to two constant boundaries for the $L^2$ loss and Bernoulli priors is also emphasized in their discussion in Section 6.2 of \cite{EksKar}.

\subsection{Quadratic running costs}

We close with an example where the cost of control is given by a quadratic function
\begin{equation}
    \varphi(u) = au^2 + bu,
\end{equation}
with $a > 0$, $b \in \mathbb{R}$, and a fixed time cost is given by $c>0$.  Despite its simplicity, this cost structure captures several essential features commonly observed in information acquisition problems. The parameter $c$ represents the fixed cost of continuing the test, $b$ corresponds to the baseline marginal cost per unit of effort, and $a>0$ accounts for the accelerating cost associated with improving precision when already exerting a high level of control.

\begin{figure}[!ht]
  \centering
  \includegraphics[scale=0.14]{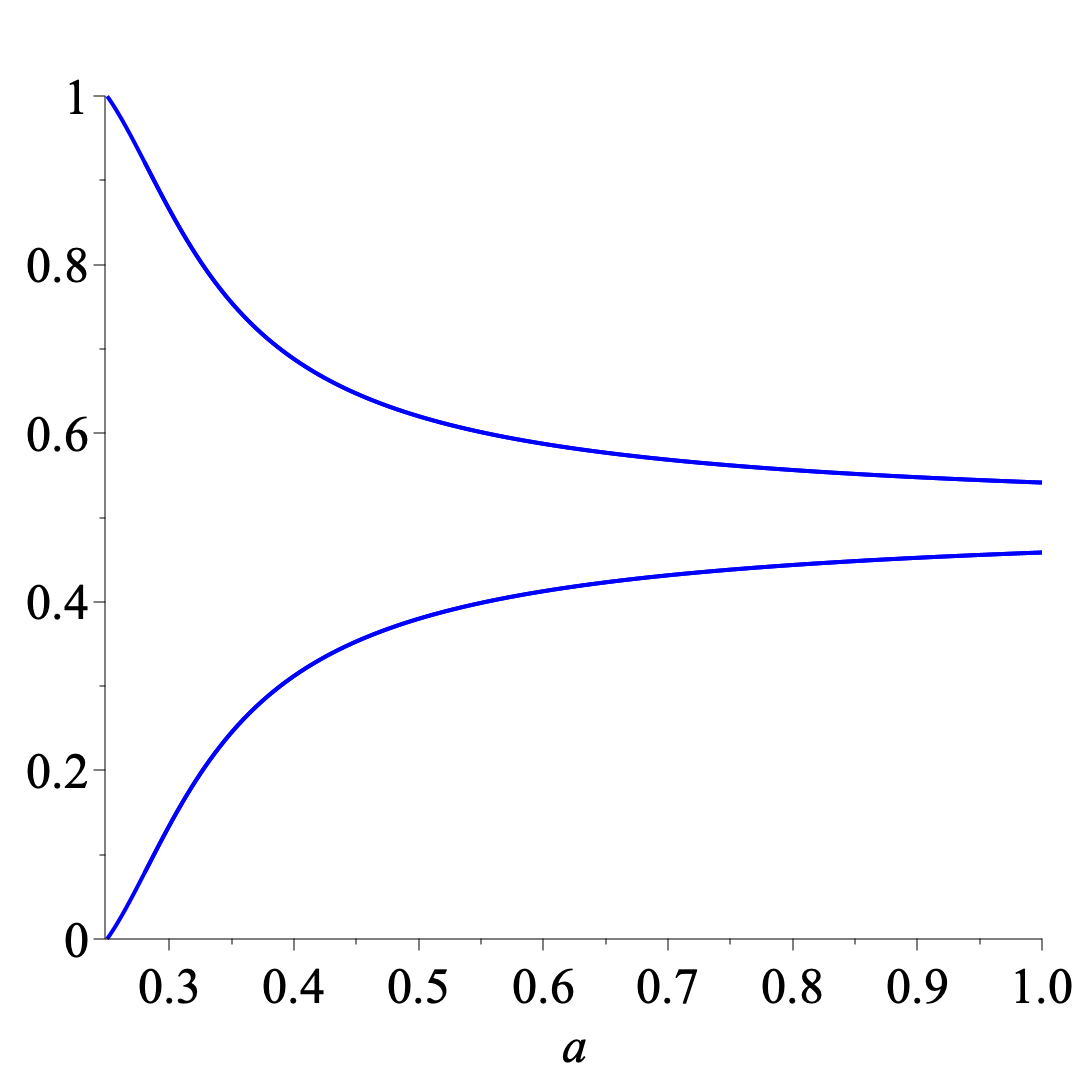}
  \includegraphics[scale=0.14]{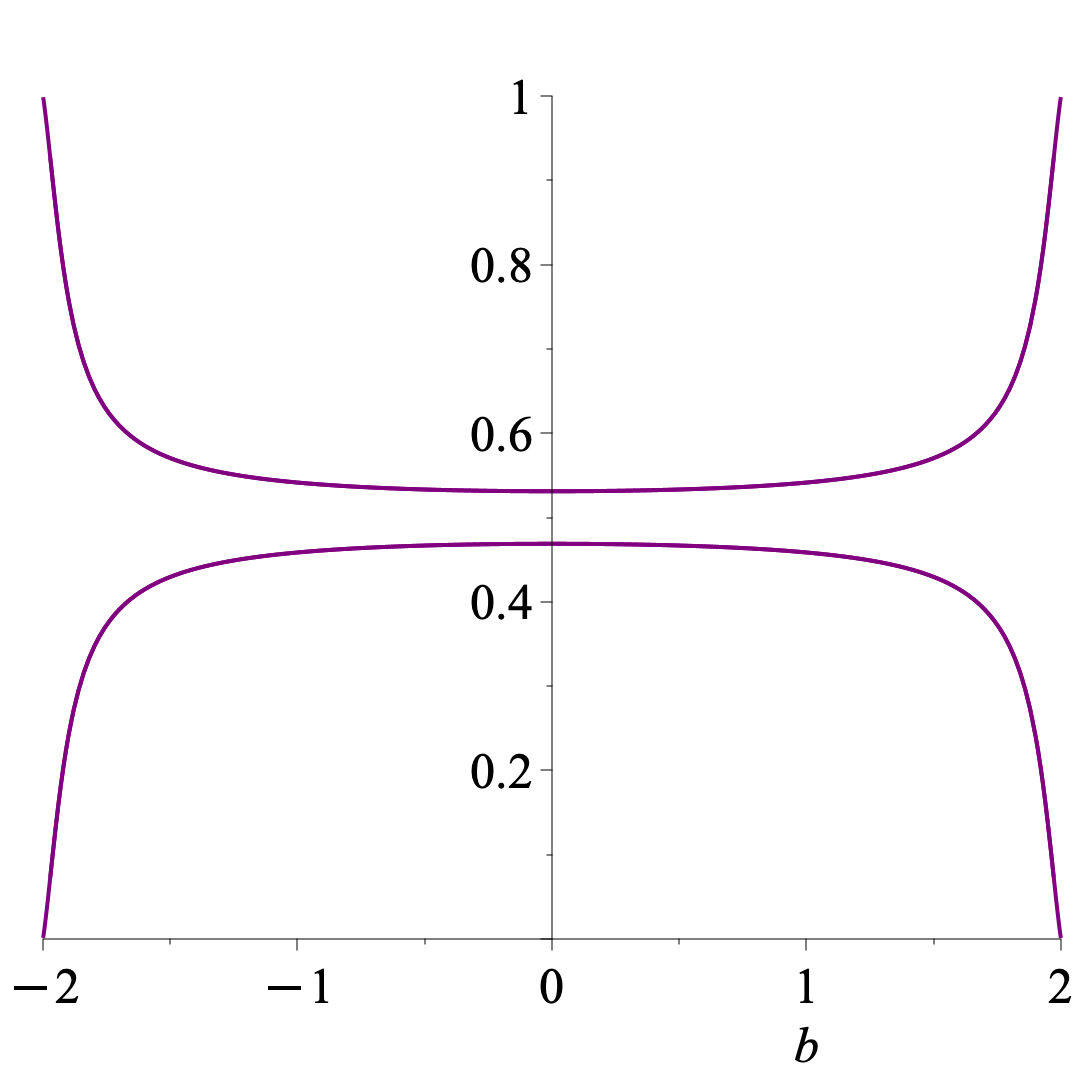}
  \includegraphics[scale=0.14]{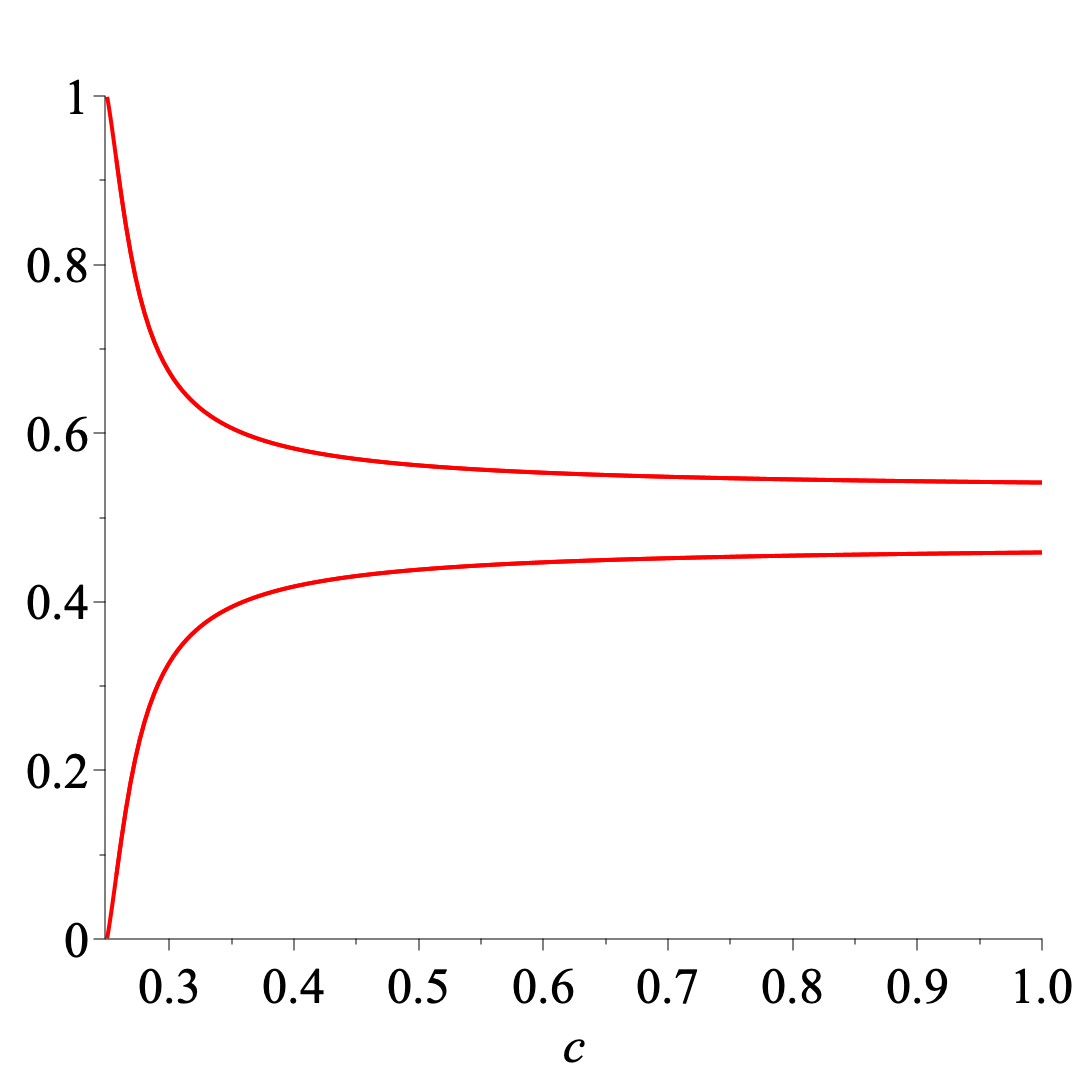}
  \caption{Optimal boundaries $(A^*, 1-A^*)$ as functions of the model parameters when $g(\pi) = \pi\wedge(1-\pi)$ and $b^2-4ac<0$. (Left) Dependence on $a$ when $b=c=1$. (Middle) Dependence on $b$ when $a=c=1$. (Right) Dependence on $c$ when $a=b=1$.}
  \label{fig:opt.bdys.quad}
\end{figure}

\begin{figure}[!ht]
  \centering
  \includegraphics[scale=0.14]{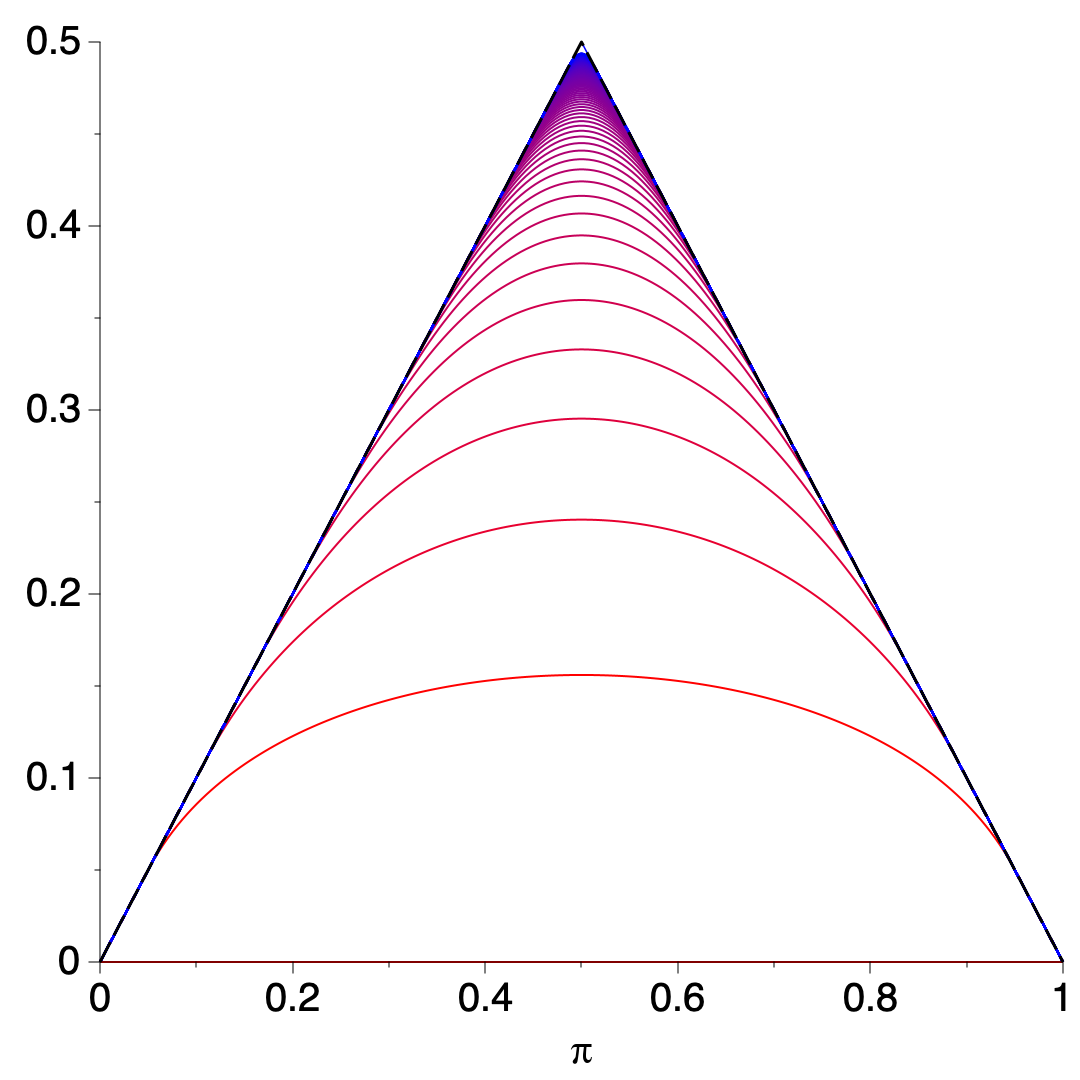}
  \hspace{5em}
  \includegraphics[scale=0.14]{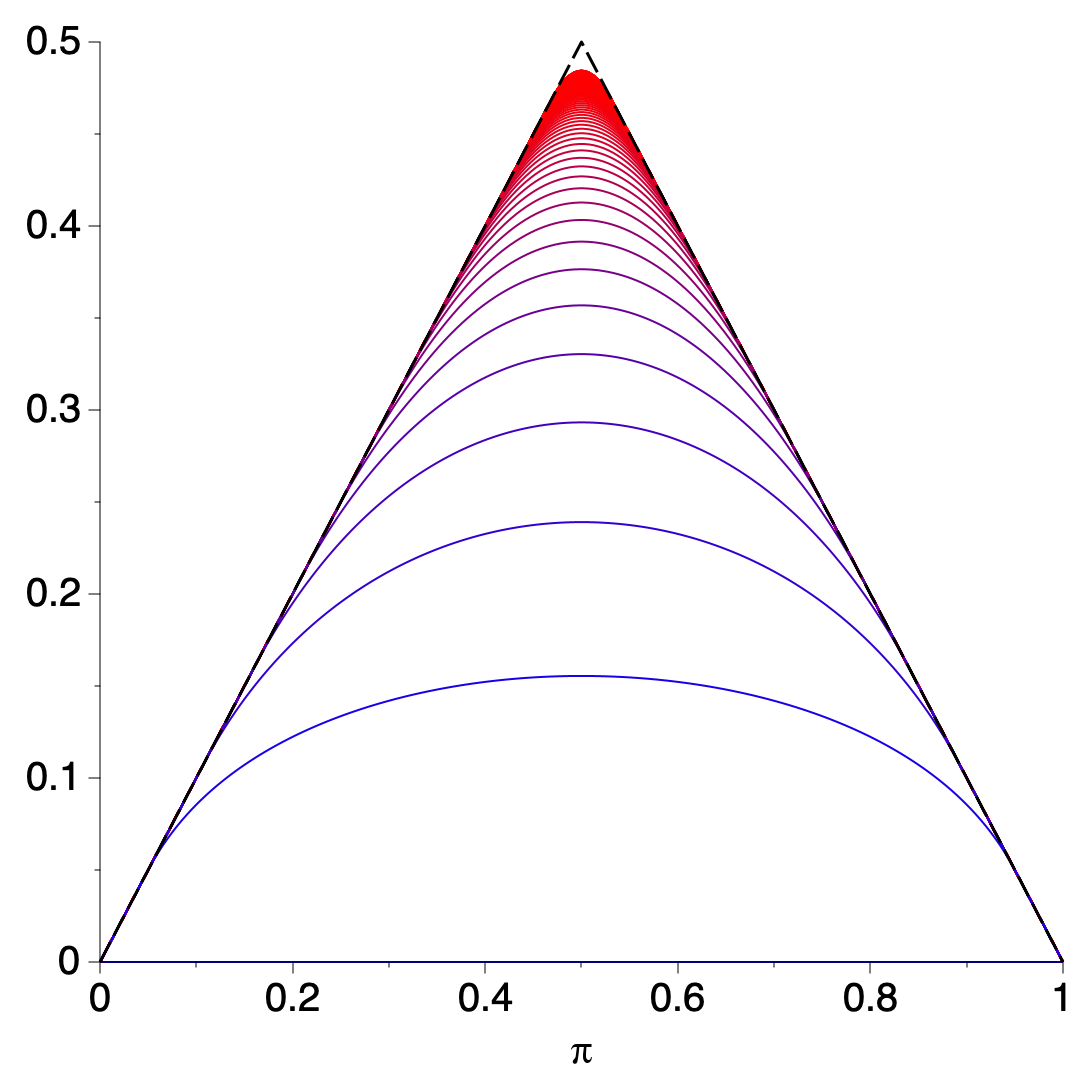}
  \caption{Value function $V(\pi)$ for  $a\in[1/4,5]$ given $b=c=1$ on a color gradient from red to blue (left), and for $b\in[0,2]$ given $a=c=1$ on a color gradient from red to blue (right).}
  \label{fig:opt.value.quad}
\end{figure}

For simplicity, we choose $\mathcal{U}=\mathbb{R}\setminus\{0\}$ to correspond to the unconstrained case. In this setting, the function $\eta(\cdot)$ of \eqref{definition_eta_function} is minimized at $u^* = -2c/b$ with the minimum $M = -(b^2 - 4ac)/4c$. Notably, the optimal control does \textit{not} depend on $a$ and the value of $M$ depends on the discriminant of the quadratic $\varphi(u) + c$, leading to distinct regimes depending on the sign of $b^2 - 4ac$. When the discriminant is negative ($b^2 - 4ac < 0$), a non-trivial solution to the classification problem may exist depending on the cost function $g(\cdot)$. For instance, in the hard classification setting, when $g(\pi) = \pi \wedge (1-\pi)$, the continuation region is always non-empty. On the other hand, when $b^2 -4ac\geq0 $ the problem always degenerates.

To illustrate the solution, we assume that we are faced with the classical terminal penalty $g(\pi) = \pi \wedge (1-\pi)$.  Figure \ref{fig:opt.bdys.quad} displays the optimal stopping boundaries $A^* = A^*(a, b, c)$ ($1-A^* = 1-A^*(a, b, c)$, resp.) as functions of the input parameters. We recall here that $A^*$ can be obtained from Theorem \ref{thm_main} as the smallest solution on $(0, 1/2]$ of the equation
\begin{equation}\label{quadratic_case_boundary_explicit}
    1 = -\frac{b^2 - 4ac}{4c} \left( 
        \frac{1-2\pi}{\pi(1-\pi)} - 2\ln\left(\frac{\pi}{1-\pi}\right)
    \right).
\end{equation}

Figure~\ref{fig:opt.value.quad} shows the corresponding value function $V(\pi)$ for different parameter choices. We highlight here several limiting cases to illustrate how the solution responds to parameter variation.

\begin{itemize}
    \item As $a \to \infty$, control becomes prohibitively expensive. The optimal stopping times converge to $0$ and the value function approaches the terminal penalty: $V(\pi) \to g(\pi)$.
    
    \item As $b \to 0$ or $c\uparrow\infty$, the optimal  control becomes increasingly aggressive, $u^* \to \pm \infty$ (depending on the sign of $b$) and $M \to a$. Despite the divergence of the controls, as long as $a>0$ the optimal stopping boundaries asymptote to a value that is bounded away from $1/2$ (more precisely, to a solution of \eqref{quadratic_case_boundary_explicit} with $b = 0$). On the region enclosed by these limiting boundaries the value function converges to a limit that lies strictly below $g$.

    \item As $b\to \pm 2\sqrt{ac}$, the boundaries diverge to $0$ and $1$, and the problem value $V(\pi)$ converges to $0$.
\end{itemize}

\begin{figure}
    \centering
  \includegraphics[scale=0.14]{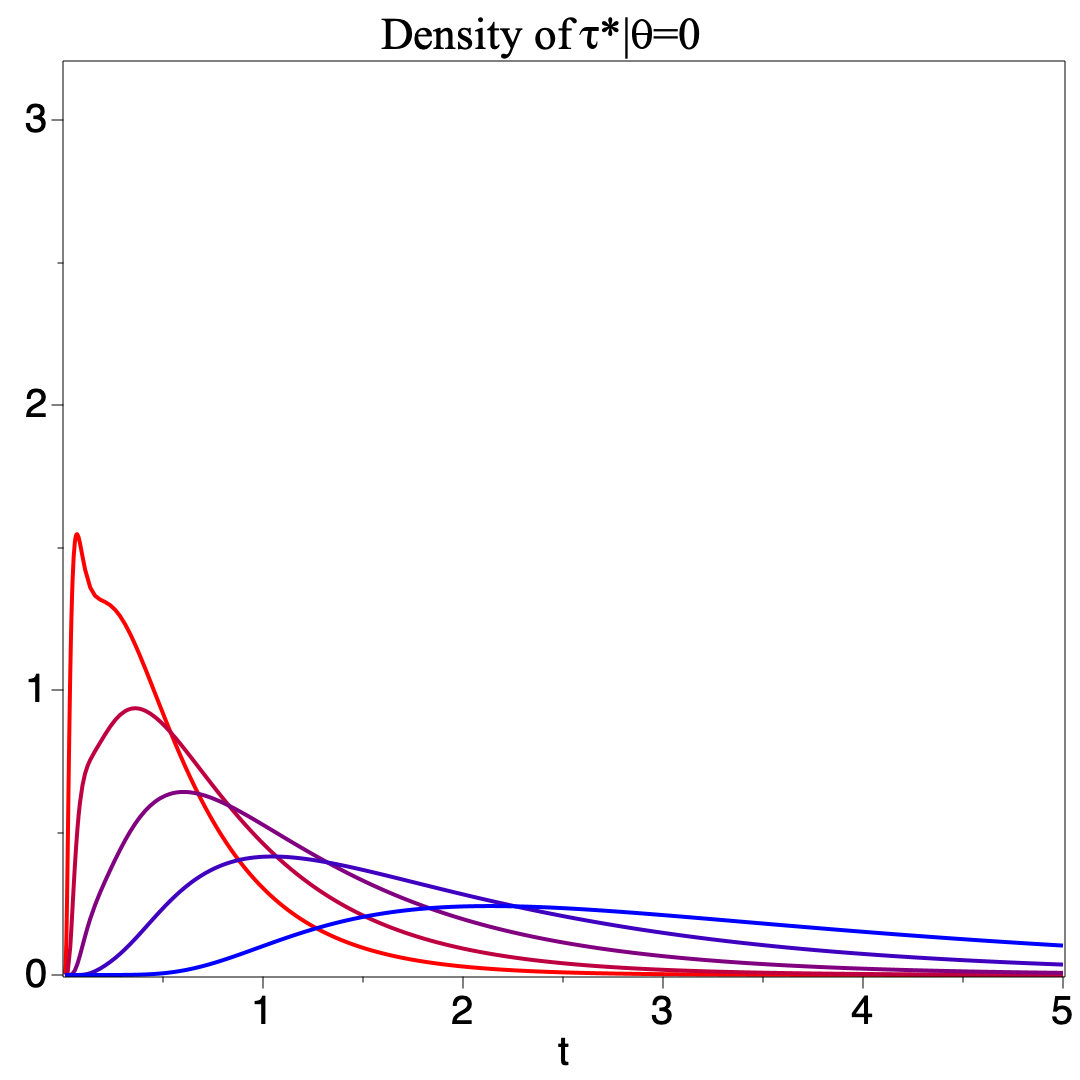}
  \includegraphics[scale=0.14]{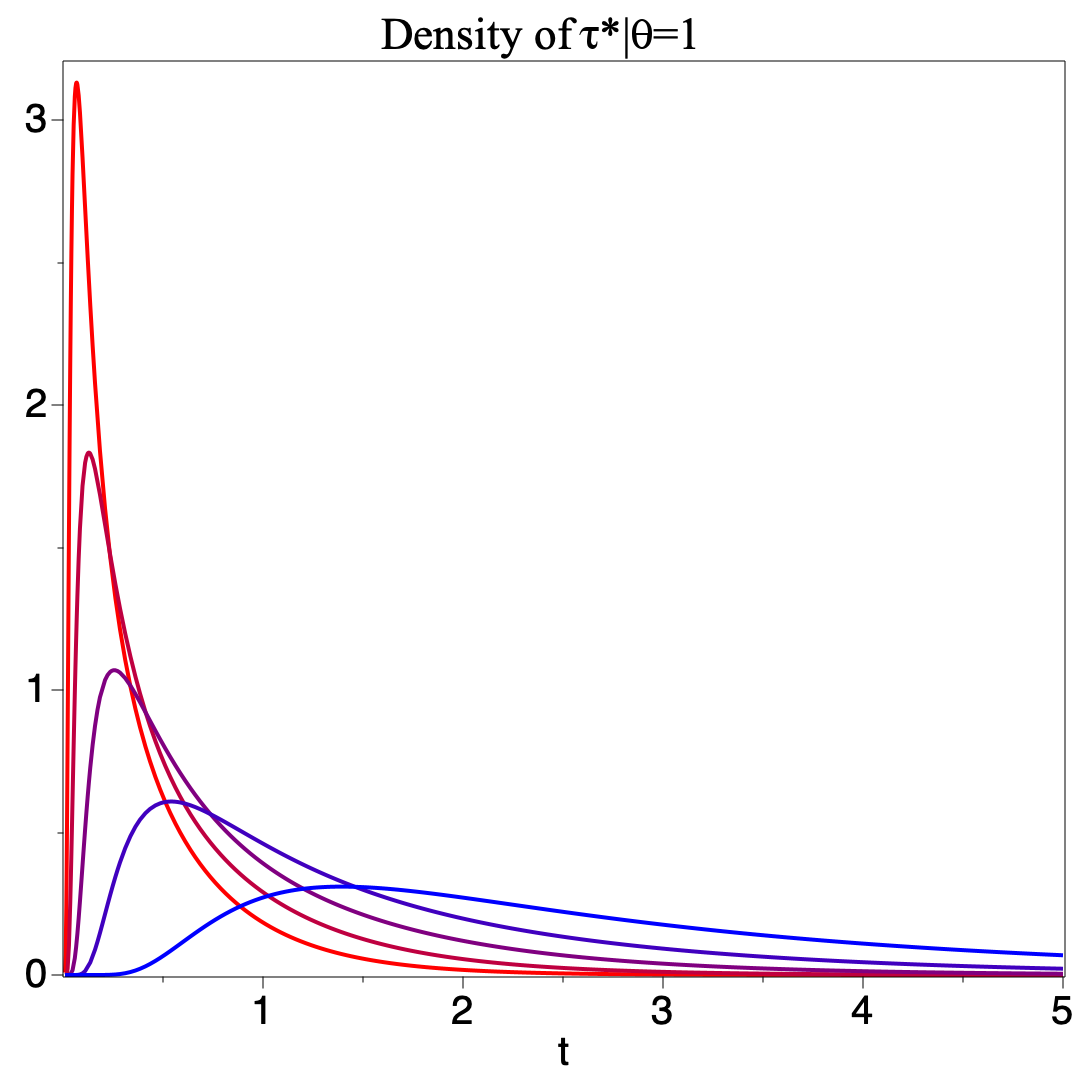}
  \includegraphics[scale=0.14]{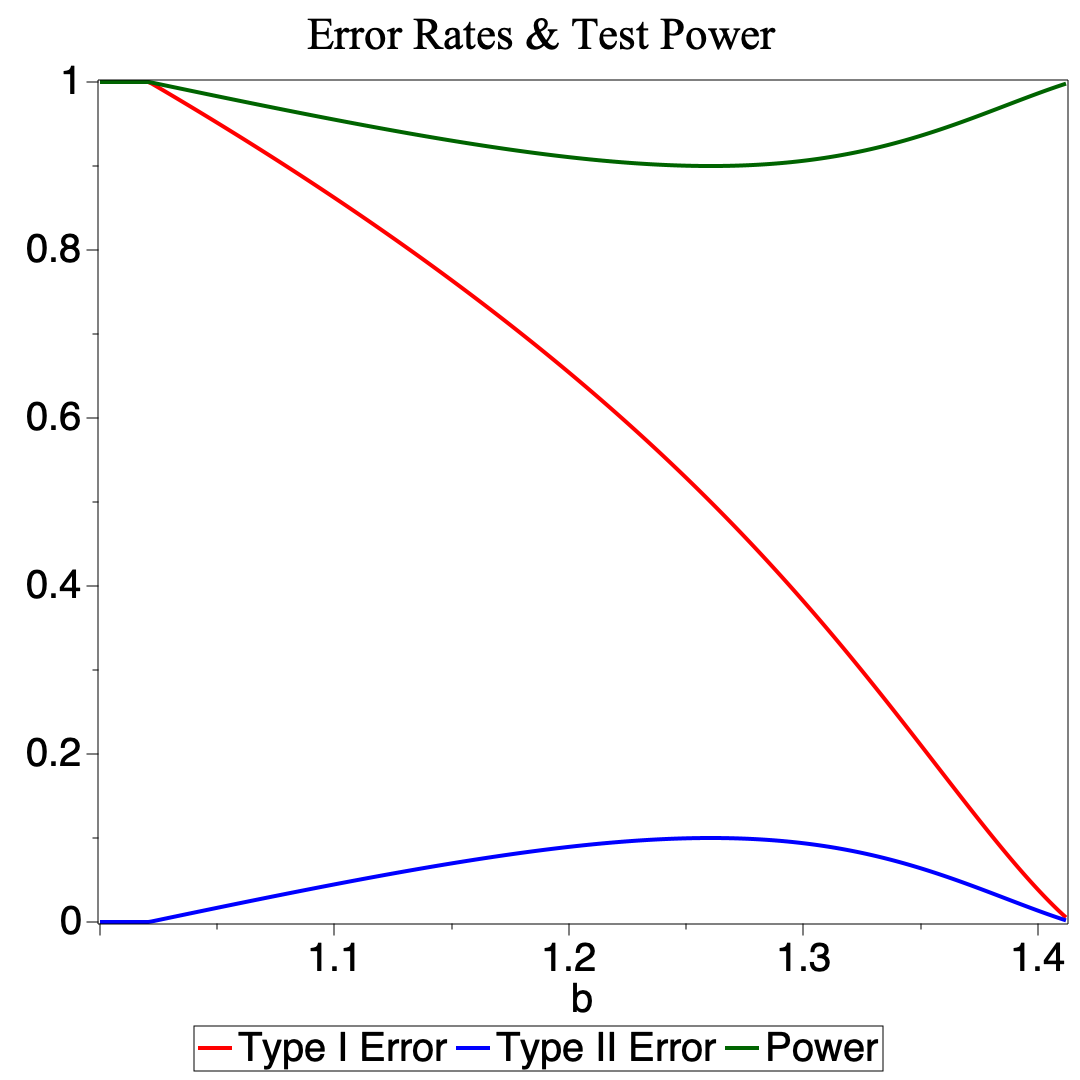}
  \caption{Density of $\tau^* \mid \theta=0$ (left) and $\tau^* \mid \theta=1$ (middle) for the classic loss when $\pi=0.625$ illustrated on a color gradient from red to blue for various values of $b$. Here $a=1/2$, $c=1$ and the parameter $b$ is chosen in an increasing fashion from the set $\{1.28,1.31,1.34,1.37,1.4\}$. (Right) Corresponding Type I and Type II error probabilities and test power as a function of $b\in[1,\sqrt{2}]$. }
  \label{fig:stop.density}
\end{figure}

In our last figure, Figure \ref{fig:stop.density}, we illustrate the evolution of the density of the optimal stopping time $\tau^*$ conditioned on $\theta=1$ and $\theta=0$. We also visualize the behavior of the Type I and Type II errors, and the test power. In these plots, the prior probability $\pi=0.625$ is biased towards the conclusion $\theta=1$. Since the posterior process is started closer to the boundary $1-A^*$, these results in an asymmetry in the shape of the conditional distributions. We recall that as $|b|$ increases, the magnitude of the control, $|u^*|=2c/|b|$, decreases. At the same time, from Figure \ref{fig:opt.bdys.quad}, the boundaries also diverge towards $0$ and $1$. Together, these effects cause the conditional distributions of the optimal stopping time to flatten and shift to the right as $b\to 2\sqrt{ac}$. 

In the rightmost panel of Figure \ref{fig:stop.density} we see the corresponding effect on the test errors. When $b$ is too small, $\pi$ falls into the stopping region and the test is never conducted. This leads to a Type I error of 1 and a Type II error of 0 (equivalently, a power of 1). We see that Type I error decreases monotonically to $0$ as $b\to2\sqrt{ac}$, while Type II error first increases before decreasing again in the limit. This reflects that in the limit $b\to2\sqrt{ac}$ the value of $\theta$ is ultimately determined with certainty.

\section{Conclusion}

We have studied a Bayesian sequential inference problem for the drift of a Brownian motion in which the observer not only chooses their stopping and decision rules, but also controls the rate at which information is acquired. This extension introduces a cost for information acquisition, transforming the classical formulation of Shiryaev \cite{Shiryaev67} into a joint problem of filtering, control, and optimal stopping. Problems of this kind arise in adaptive experimental design and real-time inference across domains such as healthcare, finance, and engineering, where information is costly and decision-making must be timely. 

Our framework accommodates a broad class of classification losses and general running costs on the control process. Interestingly, despite the added complexity introduced by controlling the flow of information, we arrive at a semi-explicit solution that retains the structural elegance of the classical setting. This work thus provides a rare example of a fully tractable triple problem, combining inference, control, and stopping, and invites further exploration of decision-making under information and resource constraints.

We conclude by pointing to three directions for further work, which we find interesting. 
First, the structural resemblance between our results and those of Ekström and Karatzas \cite{EksKar} raises the question of how far this parallel extends. In particular, it would be valuable to clarify how generality in the prior or cost structure impose constraints on the other.
Second, a game version of our model, where the controller and stopper are adversaries, seems both appealing and potentially amenable to analysis. 
Finally, extending our triple problem to a finite or random horizon setting would likely change the character of the problem in a fundamental way, calling for different methods, yet offer promising ground for further exploration.

\vspace{0.5cm}
\noindent
\textbf{Acknowledgments} We are most grateful to Ioannis Karatzas for his very careful readings and many valuable suggestions. S.~Campbell also acknowledges support from an NSERC Postdoctoral Fellowship (PDF‑599675-2025) and a Columbia University CFDT Research Grant.

\begin{appendices}

\section{Proof of Lemma \ref{lem:pi.consistent}}\label{subsec_proof_of_consistency_pi}

\begin{proof}
    Fix a signal intensity $u(\cdot)$ and using the process $X^u(\cdot)$ of \eqref{controlled_sde} define the \textit{likelihood ratio} process
    \begin{equation}
        L^u(t) \coloneqq \exp\left\{\int_0^t u(s) \, dX^u(s) -\frac{1}{2}\int_0^tu(s)^2 \, ds\right\}, \quad 0 \le t <\infty.
    \end{equation}
    Usually, in filtering theory this process appears as the likelihood ratio of the “signal-on” ($\theta=1$) hypothesis relative to the “signal-off” ($\theta=0$) alternative (cf.\ \cite{EksKar, GapShi11}). Namely, let $\mathbb{P}^1$ and $\mathbb{P}^0$ be the (conditional) measures on the original probability space $(\Omega,\mathcal{F},\mathbb{F})$ associated to $\theta=1$ and $\theta=0$, respectively, so that $\mathbb{P}=\pi\mathbb{P}^1+(1-\pi)\mathbb{P}^0$. If $L^u(\cdot)$ were a true $\mathbb{P}^0$--martingale with $\mathbb{E}^{\mathbb{P}^0}[L^u(t)]=1$ (e.g., under Novikov’s or Kazamaki’s criterion), then it is the density process of the equivalent measure $\mathbb{P}^1$ on $(\Omega,\mathcal{F}^{X^{u}}(t))$ :
    \[
      \frac{d\mathbb{P}^1}{d\mathbb{P}^0}\Bigg|_{\mathcal{F}^{X^{u}}(t)} = L^u(t), \quad 0 \le t < \infty.
    \]
    Since we prefer to keep minimal assumptions on $u(\cdot)$, we will not impose these additional martingale conditions here and simply regard $L^u(\cdot)$ as the stochastic exponential above.
    
    We introduce the auxiliary process
    \begin{equation}
        Z^u(t) \coloneqq \int_0^t u(s)\, dX^u(s), \quad 0 \le t <\infty,
    \end{equation}
    so that we can write
    \begin{equation}
        L^u(t) = \exp\left\{Z^u(t) - \frac{1}{2}\langle Z^u\rangle(t)\right\}, \ \ \ dL^u(t) = L^u(t)\,dZ^u(t), \quad 0 \le t < \infty.       
    \end{equation}
    Note that $Z^u(\cdot)$ is a semimartingale in the full filtration $\mathbb{F}$ due to the definition of $u(\cdot)$ and \eqref{eqn:sq.int.u}.
    Define the $[0,1]$--valued process
    \begin{equation}
        P^u(t) \coloneqq \frac{p L^u(t)}{p L^u(t) + (1-p)}, \quad 0 \le t < \infty.
    \end{equation}
    By applying It\^o's formula and simplifying, we see that
    \begin{equation}\label{eqn:P}
        dP^u(t) = -P^u(t)^2(1-P^u(t))\,d\langle Z^u\rangle(t) + P^u(t)(1-P^u(t)) \, dZ^u(t), \quad P^u(0) = p.
    \end{equation}
    At the same time, we notice that $\Pi^u(\cdot)$ satisfies the same stochastic differential equation as $P^u(\cdot)$ as can be seen from Lemma \ref{lem_filtering}.
    Since the coefficients of this stochastic differential equation are Lipschitz on $[0,1]$, the solution to \eqref{eqn:P} is strongly unique (cf. \cite[Chapter 5, Theorems 6 and 38]{Pr05}). By this uniqueness, we conclude $\Pi^u(t) \equiv P^u(t) =p L^u(t)/(p L^u(t) + (1-p)), \, 0 \le t < \infty$.
    
    Next, we observe that when $\theta=1$,
    \begin{equation}
        L^u(t) = \exp\left\{\int_0^t u(s) \, dW(s) + \frac{1}{2}\int_0^tu(s)^2 \, ds\right\}, \quad 0 \le t < \infty.
    \end{equation}
    Under our assumptions on $u(\cdot)$, the process $M^u(t) := \int_0^t u(s) \, dW(s), \, 0 \le t < \infty$ is a locally square integrable martingale with $\langle M^u\rangle(t) = \int_0^t u(s)^2 \, ds, \, 0 \le t < \infty$.
    It follows from the strong law of large numbers for local martingales \cite[p. 144, Corollary 1]{LS89} that $\lim_{t\to\infty}L^u(t) = \infty \ a.s.,$ and hence, $\lim_{t\to\infty}\Pi^u(t) =1 \ a.s.$ Similarly, when $\theta = 0$,
    \begin{equation}L^u(t) = \exp\left\{\int_0^t u(s) \, dW(s) -\frac{1}{2}\int_0^tu(s)^2 \, ds\right\}, \quad 0 \le t < \infty,\end{equation}
    so by the same argument $\lim_{t\to\infty}L^u(t) = 0$ and $\lim_{t\to\infty}\Pi^u(t) =0$.
\end{proof}

\section{Proof of Proposition \ref{prop_hitting_time_expect}}\label{subsec_proof_of_hitting_time_prop}

\begin{proof}
    To obtain \eqref{delta_hittinig_expectation}, i.e., to show that 
    \begin{equation}\label{delta_hittinig_expectation_in_prop}
        \ex^u \big[ \tau_\delta \big] = \frac{2}{u^2} \big(\Psi(\pi) - \Psi(\delta)\big)
    \end{equation}
    in the notation of \eqref{def_psi_function}, we use the representation for the expectation of $\tau_\delta$ taken from Chapter 5.5.C of Karatzas and Shreve \cite{BMSC} (see formula (5.59) on p. 344):
    \begin{equation}\label{tau_expectation_via_scale}
        \ex^u \big[ \tau_\delta \big] 
        = 
        -\int_{\delta}^{\pi}(p(\pi)-p(y)) \, m(d y)+\frac{p(\pi)-p(\delta)}{p(1-\delta)-p(\delta)} \int_{\delta}^{1-\delta}(p(1-\delta)-p(y)) \, m(d y).
    \end{equation}
    Here, $p(\cdot)$ and $m(\cdot)$ are the scale function and the speed measure of the process $\Pi^{u^*}_\pi(\cdot)$, respectively, which, in our case, are given by 
    \begin{equation}
        p(\pi) = \pi \quad \text{ and } \quad m(d\pi) = \frac{2 \, d\pi}{(u  \pi (1-\pi))^2}, \quad \pi \in (0, 1).
    \end{equation}
    Plugging this back to \eqref{tau_expectation_via_scale}, we obtain
    \begin{equation}
        \begin{split}
            \ex^u \big[ \tau_\delta \big] 
            &= 
            -\frac{2}{u^2}\int_{\delta}^{\pi}\frac{\pi - y}{(y\, (1-y))^2} \, dy
            +\frac{\pi - \delta}{1 - 2\delta} \frac{2}{u^2}\int_{\delta}^{1-\delta}\frac{1 - \delta - y}{(y\, (1-y))^2} \, dy
            \\&=
            \frac{2}{u^2} \left[ 
                -
                \pi \int_{\delta}^{\pi}\frac{dy}{(y\, (1-y))^2}
                +
                \int_{\delta}^{\pi}\frac{dy}{y(1-y)^2} 
                \right.\\&\quad \quad \left.+
                \frac{(\pi - \delta)(1-\delta)}{1 - 2\delta} \int_{\delta}^{1-\delta}\frac{dy}{(y\, (1-y))^2}
                -
                \frac{\pi - \delta}{1 - 2\delta} \int_{\delta}^{1-\delta}\frac{dy}{y\, (1-y)^2} 
            \right].
        \end{split}
    \end{equation}
    Using the identities
    \begin{equation}
        \int_a^b \frac{dy}{(y\, (1-y))^2} 
        =
        \left[\frac{2y-1}{y(1-y)}-2 \ln \left(\frac{1-y}{y}\right)\right] \Bigg|_a^b
        \quad \text{and} \quad 
        \int_a^b \frac{dy}{y\, (1-y)^2}
        = \left[\frac{1}{1-y} - \ln\left(\frac{1-y}{y}\right)\right] \Bigg|_a^b,
    \end{equation}
    and rearranging terms yields the desired result. 
\end{proof}

\section{Proof of Proposition \ref{prop:stat.prop}}\label{subsec_proof_of_stat_prop}

\begin{proof}
    We leverage the following useful observation. Let 
    \begin{equation}
        Y^u(t) := \mathsf{logit}\left(\Pi^{u}(t)\right) = \ln\left(\frac{\Pi^{u}(t)}{1 - \Pi^{u}(t)} \right), \quad 0 \le t < \infty
    \end{equation}
    be the \textit{log-likelihood ratio} process for an arbitrary constant control $u(\cdot) \equiv u\in\mathbb{R}\setminus\{0\}$ and initial condition $Y^u(0) = y = \mathsf{logit}(p)$. By applying It\^o's formula, and using that $\Pi^u(t) =\mathsf{sigmoid}(Y^u(t))=:S(Y^u(t)), \, 0 \le t < \infty$, we obtain
    \begin{equation}
        dY^u(t) = \frac{u^2}{2}\big(2S(Y^u(t))-1\big)\,dt + u \, dB^u(t), \quad 0 \le t < \infty.
    \end{equation}
    Expanding $B^u(\cdot)$ from Lemma \ref{lem_filtering} we get
    \begin{equation} 
    dY^u(t) = \frac{u^2}{2} (2S(Y^u(t))-1) \, dt + u\left[ \theta u \, dt + dW(t) - uS(Y^u(t)) \, dt\right]=\frac{u^2}{2} (2\theta- 1)\, dt + u \, dW(t).
    \end{equation}
    Then, by using the independence of $\theta$ and $W(\cdot)$, we see that conditioned on $\theta = 1$
    \begin{equation}
        Y^u(t) = y + \frac{1}{2} u^2 t + u W(t), \quad 0 \le t < \infty,
    \end{equation}
    and conditioned on $\theta = 0$
    \begin{equation}
        Y^u(t) = y - \frac{1}{2} u^2 t + u W(t), \quad 0 \le t < \infty.
    \end{equation}
    Given this simple representation, it will be convenient to work with all quantities in terms of $Y^u(\cdot)$. For this, we observe that the first hitting time of $\Pi^u(\cdot)$ to $1-A^*$ (resp.\ $A^*$) is the same as the first hitting time of $Y^u(\cdot)$ to $\Gamma^*:=\mathsf{logit}(1-A^*)\geq0$ (resp.\ $-\Gamma^*$).

    We will treat the case where we condition on $\theta=1$ as the remaining case is similar. The scale function associated with the conditional dynamics of $Y^u(\cdot)$ is $p(x) = 1-\exp(-x)$. By \cite[Equation (5.5.61)]{BMSC}
    \begin{equation}
        \mathbb{P}\big(Y^u(\tau^*) =\Gamma^* \mid \theta =1\big) = \frac{1- \exp(-y-\Gamma^*)}{1 - \exp(-2\Gamma^*)}, \qquad -\Gamma^*\leq y\leq \Gamma^*.
    \end{equation}
    Since $d^*= h(1-A^*)$ if and only if $Y^u(\tau^*) =\Gamma^*$, this gives us the first result of the Proposition up to unwinding the coordinate transformation induced by the logistic function.
    By \cite[Equation (5.5.59)]{BMSC}
    \begin{equation}
        \mathbb{E}\big[\tau^* \mid \theta=1\big] 
        =
        -\int_{-\Gamma^*}^y (p(y) - p(z)) \, m(dz) +\frac{p(y) - p(-\Gamma^*)}{p(\Gamma^*)-p(-\Gamma^*)}\int_{-\Gamma^*}^{\Gamma^*} (p(\Gamma^*) - p(z)) \, m(dz)
    \end{equation}
    where $m(dz) = \frac{2}{u^2}e^z dz$. That is,
    \begin{equation}
        \mathbb{E}\big[\tau^* \mid \theta=1\big]=\frac{2}{u^2}\left[2\Gamma^*\frac{1- \exp(-y-\Gamma^*)}{1-\exp(-2 \Gamma^*)}-(\Gamma^* +y)\right], \ \ \ -\Gamma^*\leq y\leq\Gamma^*.
        \end{equation}
    Finally, we also have the following representation for the Laplace transform (cf. \cite[Page 315]{BS12})
    \begin{equation}
        \mathbb{E}\big[e^{-\alpha\tau^*} \mid \theta=1\big] = \frac{e^{-\frac{1}{2}(\Gamma^*+y)}\sinh\left((\Gamma^*-y)\sqrt{\frac{2\alpha}{u^2}+\frac{1}{4}}\right)+e^{\frac{1}{2}(\Gamma^*-y)}\sinh\left((y+\Gamma^*)\sqrt{\frac{2\alpha}{u^2}+\frac{1}{4}}\right)}{\sinh\left(2\Gamma^*\sqrt{\frac{2\alpha}{u^2}+\frac{1}{4}}\right)}.
    \end{equation}
    By inverting the Laplace transform (see also \cite[Page 315]{BS12}) we get the density,
    \begin{equation}
        \mathbb{P}\big(\tau^*\in dt \mid \theta=1\big)
        =
        u^2e^{-\frac{u^2 t}{8}}\left[e^{-\frac{1}{2}(\Gamma^*+y)}ss_{u^2 t}(\Gamma^*-y,2\Gamma^*)+e^{\frac{1}{2}(\Gamma^*-y)}ss_{u^2t}(y+\Gamma^*,2\Gamma^*)\right] dt,
        \end{equation}
    where
    \begin{equation}
        ss_t(a,b) =\sum_{k=-\infty}^\infty \frac{b-a+2kb}{\sqrt{2\boldsymbol{\pi}}t^{3/2}}e^{-\frac{(b-a+2kb)^2}{2t}},  \quad a<b.
    \end{equation}
    Performing the necessary substitutions to write these expressions in terms of $\pi$ (and repeating the process for the dynamics given $\theta=0$) completes the proof.
\end{proof}

\end{appendices}

\printbibliography

\end{document}